\def\be{\begin{equation}}
\def\ee{\end{equation}}
\def\bse{\begin{subequations}}
\def\ese{\end{subequations}}
\let\er\eqref
\let\be\beta
\newcommand{\R}{{\mathbb R}}
\newtheorem{theorem}{Theorem}
\newtheorem{lemma}[theorem]{Lemma}
\newtheorem{proposition}[theorem]{Proposition}
\newtheorem{remark}[theorem]{Remark}
\newtheorem{definition}[theorem]{Definition}
\def\bse{\begin{subequations}}
\def\ese{\end{subequations}}
\title{Keller-Segel model with Logarithmic Interaction and nonlocal reaction term}
\author{ Shen Bian\footnote{Corresponding author. Email: \texttt{bianshen66@163.com}. }
  \quad Quan Wang\footnote{Email: \texttt{2021201044@buct.edu.cn}. } \\
  \
  \\
Beijing University of Chemical Technology, 100029, Beijing. }
\date{}
\begin{document}
\let\cleardoublepage\clearpage

\maketitle

\begin{abstract}
We investigate the global existence and blow-up of solutions to the Keller-Segel model with nonlocal reaction term $u\left(M_0-\int_{\R^2} u dx\right)$ in dimension two. By introducing a transformation in terms of the total mass of the populations to deal with the lack of mass conservation, we exhibit that the qualitative behavior of solutions is decided by a critical value $8\pi$ for the growth parameter $M_0$ and the initial mass $m_0$. For general solutions, if both $m_0$ and $M_0$ are less than $8\pi$, solutions exist globally in time using the energy inequality, whereas there are finite time blow-up solutions for $M_0>8\pi$ (It involves the case $m_0<8\pi$) with any initial data and $M_0<8\pi<m_0$ with small initial second moment. We also show the infinite time blow-up for the critical case $M_0=8 \pi.$ Moreover, in the radial context, we show that if the initial data $u_0(r)<\frac{m_0}{M_0} \frac{8 \lambda}{(r^2+\lambda)^2}$ for some $\lambda>0$, then all the radially symmetric solutions are vanishing in $L_{loc}^1(\R^2)$ as $t \to \infty$. If the initial data $u_0(r)>\frac{m_0}{M_0} \frac{8 \lambda}{(r^2+\lambda)^2}$ for some $\lambda>0$, then there could exist a radially symmetric solution satisfying a mass concentration at the origin as $t \to \infty.$
\end{abstract}
\section{Introduction}
\def\theequation{1.\arabic{equation}}\makeatother
\setcounter{equation}{0}
\def\thetheorem{1.\arabic{theorem}}\makeatother
\setcounter{theorem}{0}

In this work, we study the Keller-Segel model with logistic sources in dimension two
\begin{align}\label{fisherks}
\left\{
      \begin{array}{ll}
      u_t = \Delta u-\nabla \cdot \left(u \nabla c\right)+u \left( M_0-\int_{\R^2} u dx  \right), ~~& x\in \R^{2},~t\geq 0,\\
     -\Delta c =u , ~~ & x\in \R^{2} ,~t\geq 0, \\
     u(x,0) =u_{0}(x) \ge 0, ~~& x\in \R^2.
      \end{array}\right.
\end{align}
Here the initial mass is defined as $m_0:=\int_{\R^2} u_0(x) dx.$ This model is developed to describe the biological phenomenon chemotaxis \cite{KS70,Pat53} by introducing nonlocal terms in the logistic growth factor. The first equation states the random (Brownian) diffusion of the cells with a bias directed by the chemoattractant concentration. The chemoattractant $c$ is directly released by the cells, diffuses on the substrate and can be expressed
\begin{align}
c(x, t) = -\frac{1}{2\pi} \int_{\mathbb R^2} u(y,t) \log |x-y| dy.
\end{align}
The model at hand supposes that the cells production with logistic sources. Here $M_0$ can be viewed as the mass capacity \cite{bp07} and sometimes also called Malthusian parameter, induces an exponential growth for low density populations \cite{NT13}. The nonlocal term $\int_{\R^2} u dx$ describes the influence of the total mass of the species in the growth of the population, which is a competitive term limiting such growth \cite{NT13}.

As pointed out in \cite{DP04,bp07}, in the absence of the reaction term $u \left( M_0-\int_{\R^2} u dx  \right)$, \er{fisherks} has the property that the total mass is conserved $\int_{\R^2} u_0(x) dx=\int_{\R^2}u(x,t)dx$ and the solution of
\begin{align}\label{ks}
\left\{
      \begin{array}{ll}
      u_t = \Delta u-\nabla \cdot \left(u \nabla c\right), ~~& x\in \R^{2},~t\geq 0,\\
     -\Delta c =u , ~~ & x\in \R^{2} ,~t\geq 0, \\
     u(x,0) =u_{0}(x) \ge 0, ~~& x\in \R^2.
      \end{array}\right.
\end{align}
exists globally when the initial mass $m_0<8\pi,$ while the solution becomes a singular measure for $m_0>8\pi.$ A natural problem is that does the nonlocal reaction change this effect?

To analyze this issue, we consider the total mass
\begin{align}
m(t)=\int_{\R^n} u(x,t)dx
\end{align}
which satisfies
\begin{align}
  \frac{dm(t)}{dt}=m(t) \left( M_0-m(t) \right)
\end{align}
and we obtain
\begin{align}\label{mt}
m(t)=\frac{M_0}{1+\overline{C}e^{-M_0 t}}.
\end{align}
where
\begin{align}
\overline{C}=\frac{M_0-m_0}{ m_0}.
\end{align}
We find that the mass increases in time when $m_0<M_0$ and versus when $m_0>M_0,$ in both cases it converges to $M_0$ as $t \to \infty.$
The aim of this paper is to explore the influence of $M_0,m_0$ on the dynamics of solutions when the nonlocal reaction is present.

In order to put our system in perspective, we recall some related cases. Actually, chemotaxis models with local sources (where $u(M_0-\int_{\R^2}udx)$ is replaced by $a_0-u(a_1-a_2 u^\gamma)$ have been studied recently, which describes the situation where the influence of the nonlocal terms is neglected, and the global existence results have been obtained for different parameters $a_0,a_1,a_2$, see \cite{GST16,NT13,TW07,WMZ14} and the references therein. Logistic growth described by nonlocal terms has been used in the context of chemotaxis, which suggests a growth coefficient rate in a competitive system modelling cancer cells behavior. For instance, the nonlocal term in \cite{25} is given in the form $\mu_1 u\left( 1-\int_{\Omega} k_1(x,y) u(y) dy \right)$ where $u$ denotes the cancer cells density. We refer the reader to \cite{NT13,25} for more details.

To illustrate our results, we observe that the striking feature of \er{fisherks} is the absence of mass conservation which makes a difficult task to work with \er{fisherks} directly. To overcome such difficulty, we employ a transformation
\begin{align}\label{tritri}
\rho(x,t)=\frac{u(x,t)}{m(t)}
\end{align}
where $m(t)$ is defined as \er{mt}, and it transforms system \er{fisherks} into a diffusion-aggregation equation for $\rho(x,t)$ as follows
\begin{align}\label{rhorho}
\left\{
      \begin{array}{ll}
      \rho_t = \Delta \rho-m(t) \nabla \cdot \left(\rho \nabla w\right) , ~~& x\in \R^{2},~t\geq 0,\\
     -\Delta w =\rho , ~~ & x\in \R^{2} ,~t\geq 0, \\
     \rho(x,0) =\frac{u_{0}(x)}{m_0} \ge 0, ~~& x\in \R^2.
      \end{array}\right.
\end{align}
Here we should mention that \er{rhorho} admits mass conservation $\int_{\R^2}\rho(x,t)dx=\int_{\R^2} \rho(x,0) dx=1$ via the transformation, the price we pay is that the coefficient in front of the concentration is $m(t)$ instead of a constant which also brings barriers to mathematical analysis. We remark that for the case that the concentration with constant sensitivity $\chi$ (without loss of generality, we impose $\chi=1$), there have been a large amount of results \cite{BCM08,bdp06,bp07} and there is a threshold $8\pi$ on the initial mass separating global existence and finite time blow-up as we mentioned before. Although we can't take advantage of the initial mass to determine the dynamical behaviors of \er{rhorho} due to $\int_{\R^2}\rho(x,0)dx=1,$ this fact still stimulates us to employ the mass conservation of \er{rhorho} to establish global existence and blow-up to system \er{fisherks}. Precisely, the main results of the general solutions are stated as follows in connection with $M_0,m_0$ and $8\pi.$
\begin{itemize}
 \item Global existence: for $m_0<M_0<8\pi,$ there exists a weak solution globally in time with bounded initial second moment. Furthermore, the weak solution satisfies the energy inequality \er{Fu}, see Theorem \ref{globalexist}. In addition, for $M_0<m_0<8\pi,$ all solutions of \er{fisherks} exist globally by the comparison principle.
 \item $m_0<M_0=8\pi:$ solutions exist globally in time, see Theorem \ref{globalexist}. For $M_0=8\pi,$ every stationary solution uniquely assumes a radially symmetric form in $\R^2$ up to translation and has  infinite second moment. This result provides that the solution blows up as a delta dirac at the center of mass as $t \to \infty$ with finite second moment at any time. We will comment further on these issues in Section \ref{sec3}.
 \item Finite time blow-up: for $M_0>8\pi,$ the weak solution blows up at finite time, see Theorem \ref{blowup}. For $M_0<8\pi<m_0,$ if the initial second moment is less than a constant depending on $M_0,m_0,$ then there exist solutions blow up at finite time, see Theorem \ref{blowup2}.
 \item For $M_0<m_0=8\pi,$ we can infer from \cite{BCM08} that solutions might have infinite time blow-up by the comparison principle. However, we can't exclude the possibility that solutions may be global in time. For $M_0=8\pi<m_0$, both global existence and finite time blow-up can occur, although this is still an open question.
\end{itemize}

Let's emphasise that in the radial context, there are steady states to \er{fisherks} only for $M_0=8\pi$ given by a one-parameter family $U_s(r)=\frac{8 \lambda}{(r^2+\lambda)^2}$ with $\lambda>0.$ The stationary solutions play a critical role on the initial data separating global existence and blow-up. If the initial data $u_0(r)<\frac{m_0}{M_0} \frac{8 \lambda}{(r^2+\lambda)^2}$, then all the radially symmetric solutions are vanishing in $L_{loc}^1(\R^2)$ as $t \to \infty$. If the initial data $u_0(r)>\frac{m_0}{M_0} \frac{8 \lambda}{(r^2+\lambda)^2}$, then there could exist a radially symmetric solution satisfying a mass concentration at the origin as $t \to \infty.$  See Section \ref{radialglobal} and Section \ref{radialblowup} for more details.

The results are organized as follows. Section \ref{globalfinite} shows the global existence and the finite time blow-up of the weak solution to \er{fisherks}. Section \ref{sec1} detects the global existence of solutions with the help of the energy inequality for $M_0<8\pi$. The finite time blow-up is considered in Section \ref{sec2} with bounded initial second moment for $M_0>8\pi$. Section \ref{sec3} explores the infinite time blow-up when $M_0=8\pi.$ Section \ref{sec5} is devoted to the global existence and the mass concentration at the origin for the radially symmetric solutions.

\section{Global existence and finite time blow-up for general solutions}\label{globalfinite}
\def\theequation{2.\arabic{equation}}\makeatother
\setcounter{equation}{0}
\def\thetheorem{2.\arabic{theorem}}\makeatother
\setcounter{theorem}{0}

In this section, we prove the global well-posedness for solutions with small $M_0$ and blow-up for large $M_0.$ Here an energy inequality (which is derived to prove global existence) introduces a threshold on $M_0$, below which the solution exists globally and above which the finite time blow-up occurs with the aid of the second moment. In order to analyse more precisely, we use the usual definition of solutions in distribution sense as in \cite{bp07}.
\begin{definition}
Let $u_0(x) \ge 0$ be the initial data satisfying $\int_{\R^2} u_0(x) |\log u_0(x)| dx +\int_{\R^2} |x|^2 u_0(x) dx<\infty$ and $T\in (0,\infty).$ $u$ is a weak solution to system \er{fisherks} if it satisfies:
\begin{itemize}
  \item[(i)] Regularity:
  \begin{align*}
   & u \in L^\infty\left(0,T;L_+^1 \cap L^p(\R^2) \right), ~~\mbox{for~any}~~1 \le p \le \infty,\\
   & \partial_t u \in L^{2} \left(0,T;H^{-1}(\R^2) \right), \\
   & u^r \in L^2\left(0,T;H^1(\R^2)\right)~~\mbox{for~any}~~r \ge 1/2.
  \end{align*}
  \item[(ii)] For any $\psi \in C_0^\infty(\R^2)$ and any $0<t<\infty$,
  \begin{align}
&\int_{\R^2} \psi u(\cdot,t) dx-\int_{\R^2} \psi u_{0}(x)dx \nonumber \\
=&\int_0^T \int_{\R^2} \Delta \psi u dxdt -\frac{1}{4\pi} \iint_{\R^2 \times \R^2} \frac{[\nabla \psi(x)-\nabla \psi(y)]\cdot(x-y)}{|x-y|^2} u(x,t) u(y,t) dxdydt\nonumber \\
&+\int_0^T \int_{\R^2} u(\cdot,t) \psi(x) dx \left(M_0-\int_{\R^2} u(\cdot,t) dx \right) dt.
\end{align}
\end{itemize}
\end{definition}

\subsection{Global existence for $M_0 \le 8\pi$}\label{sec1}
In this subsection, we will show the global existence of a weak solution for both cases $M_0=8\pi$ and $M_0<8\pi$. Firstly we recall the logarithmic HLS inequality which is prepared to the estimates of the weak solution.
\begin{lemma}[\cite{bp07}]
Let $f \ge 0 \in L^1(\R^2)$ such that $f\log f \in L^1(\R^2)$. Set $\int_{\R^2} fdx=M,$ then
\begin{align}\label{logHLS}
\int_{\R^2} f \log f dx+\frac{2}{M} \iint_{\R^2 \times \R^2} f(x)f(y)\log |x-y| dxdy \ge M\left( \log M-1-\log \pi \right).
\end{align}
\end{lemma}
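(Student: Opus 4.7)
The plan is to prove the inequality by a variational argument. After the normalisation $f = Mg$ with $\int_{\R^2} g\,dx = 1$, the statement reduces to showing
\[
\int_{\R^2} g\log g\,dx + 2\iint_{\R^2\times\R^2} g(x)g(y)\log|x-y|\,dxdy \;\ge\; -1-\log\pi,
\]
so it suffices to minimise the functional
\[
\Phi[g] := \int_{\R^2} g\log g\,dx + 2\iint_{\R^2\times\R^2} g(x)g(y)\log|x-y|\,dxdy
\]
over probability densities with $g\log g \in L^1(\R^2)$. A direct change of variables shows $\Phi$ is invariant under the rescaling $g(x) \mapsto \lambda^2 g(\lambda x)$, and translation invariance is clear; these symmetries will be used to normalise any minimising sequence.

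First I would apply the Riesz rearrangement inequality (after suitable truncation of the sign-indefinite logarithmic kernel) to reduce to radial, nonincreasing $g$. Then, taking a minimising sequence and fixing the scale by the invariance above, I would extract a subsequence converging weakly in $L^1$ to some $g_*$; the key technical step is controlling the double integral under weak convergence, which is handled by splitting $\log|x-y| = \log_+|x-y| - \log_-|x-y|$ and combining the entropy bound with rearrangement to rule out both concentration and vanishing. The Euler--Lagrange equation for a critical point then reads
\[
\log g_*(x) + 1 + 4\int_{\R^2} g_*(y)\log|x-y|\,dy = \mu,
\]
and applying $-\Delta$ (using $\Delta_x \log|x-y| = 2\pi\delta(x-y)$ in $\R^2$) converts this into the Liouville equation $-\Delta\log g_* = 8\pi g_*$. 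The classical classification of Liouville solutions on $\R^2$ with $\int g_* = 1$ then forces
\[
g_*(x) = \frac{1}{\pi}\,\frac{\lambda}{(\lambda+|x|^2)^2}
\]
up to translation, which is, after normalisation, the same one-parameter family $U_s$ of steady states featured in the radial discussion of the paper. An explicit computation of $\Phi[g_*]$, independent of $\lambda$ by scale invariance, then yields the sharp value $-1-\log\pi$.

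The hard part is the compactness of the minimising sequence: the scale and translation invariances force careful normalisation, and the sign-indefinite logarithmic kernel rules out a straightforward application of weak lower semicontinuity, so extracting a non-degenerate limit requires combining the entropy bound with the rearrangement step to prevent both concentration at a point and escape to infinity. Once the minimiser is in hand, the identification via Liouville's equation and the explicit evaluation at the extremal are essentially routine, and reversing the initial normalisation $f = Mg$ recovers the stated inequality with the correct $M$-dependence on the right-hand side.
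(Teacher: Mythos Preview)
The paper does not prove this lemma at all: it is quoted with a citation to \cite{bp07} (Perthame's book) and then used as a black box in Step~3 of Theorem~\ref{globalexist} and again in Proposition~\ref{steadyproperties}. So there is no ``paper's own proof'' to compare against.

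As for the substance of your sketch, it is the standard variational route to the logarithmic Hardy--Littlewood--Sobolev inequality (essentially the Carlen--Loss/Beckner argument). Your normalisation $f=Mg$ and the reduction to $\Phi[g]\ge -1-\log\pi$ are correct, the scale invariance check is right, the Euler--Lagrange equation is derived correctly, and the Liouville classification indeed produces the same one-parameter family the paper records in \eqref{us}/\eqref{rhosr}. You also correctly flag the genuine difficulty: because $\Phi$ is scale- and translation-invariant and the logarithmic kernel changes sign, weak lower semicontinuity is not automatic, and a complete proof needs either a concentration--compactness argument or a competing-symmetries device to secure a nondegenerate minimiser. As written, your proposal is an accurate \emph{plan} rather than a finished proof, since that compactness step is acknowledged but not carried out; if you want a self-contained argument you would need to fill it in, or else simply cite the inequality as the paper does.
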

Now we show the main results for $M_0 \le 8\pi$.
\begin{theorem}[Global existence for $M_0 \le 8\pi$]\label{globalexist}
Assume $\int_{\R^2} |x|^2 u_0(x) dx<\infty$ and $\int_{\R^2} u_0(x) \left|\log u_0(x)\right| dx<\infty.$ When the initial data satisfies $u_0 \in L_+^1 \cap L^\infty(\R^2)$ and
\begin{align}
\int_{\R^2} u_0(x) dx <M_0 \le 8\pi,
\end{align}
then there exists a global weak solution to \er{fisherks} satisfying that for any $0<t<\infty$
\begin{align}
\int_{\R^2} u(\cdot,t) \left|\log u(\cdot,t) \right|dx+\int_{\R^2} u(\cdot,t) |x|^2 dx &\le C\left(e^t,\int_{\R^2} u_0(x) |\log u_0(x)|dx, \int_{\R^2} u_0(x) |x|^2 dx  \right), \\
\|u(\cdot,t)\|_{L^p(\R^2)} &\le C\left(e^t, \|u_0\|_{L^p(\R^2)}\right),\quad 1 \le p \le \infty.
\end{align}
Furthermore, the weak solution satisfies the entropy dissipation
\begin{align}\label{Fu}
 & F[u(\cdot,t)]+\int_0^t \int_{\R^2} \frac{u(x,s)}{m(s)} \left|\nabla \log u(x,s)-\nabla c(x,s) \right|^2 dxds \nonumber \\
 & +\int_0^t \frac{M_0-m(s)}{2m(s)} \int_{\R^2} u(x,s) c(x,s) dxds  \le  F[u_0(x)],
\end{align}
where the free energy
\begin{align}\label{freeenergy}
F(u)=\frac{1}{m(t)} \int_{\R^2} u \log u dx-\frac{1}{2m(t)} \int_{\R^2} u c dx-\log m(t)
\end{align}
and $m(t)=\frac{M_0}{1+\frac{M_0-m_0}{m_0}e^{-M_0 t}}$.
\end{theorem}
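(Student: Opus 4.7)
The plan is to exploit the transformation $\rho=u/m(t)$, which reduces \er{fisherks} to the mass-conserving system \er{rhorho}, and then to mimic the strategy of \cite{bp07} for the classical parabolic-elliptic Keller-Segel system, but with the constant chemotactic sensitivity replaced by the time-dependent factor $m(t)$. Using $u=m(t)\rho$, $c=m(t)w$ and $\int\rho\,dx=1$, one checks that $F[u]$ in \er{freeenergy} coincides with $\int\rho\log\rho\,dx-\frac{m(t)}{2}\int\rho w\,dx$. Differentiating this quantity along \er{rhorho} produces the standard Keller-Segel dissipation $-\int\rho|\nabla\log\rho-m(t)\nabla w|^2\,dx$, which, after substituting back $u$ and $c$, equals $-\int u|\nabla\log u-\nabla c|^2/m(t)\,dx$; the time derivative of the coefficient $m(t)$ together with $m'(t)=m(t)(M_0-m(t))$ contributes an extra term $-\frac{m(t)(M_0-m(t))}{2}\int\rho w\,dx=-\frac{M_0-m(t)}{2m(t)}\int uc\,dx$, and integrating in time yields exactly \er{Fu}.

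The next step is to invoke the logarithmic HLS inequality \er{logHLS}, applied to $u$ with mass $m(t)$, combined with the identity $\iint u(x)u(y)\log|x-y|\,dxdy=-2\pi\int uc\,dx$, to obtain
\begin{align*}
\frac{1}{m(t)}\int u\log u\,dx-\frac{4\pi}{m(t)^2}\int uc\,dx \ \ge\ \log m(t)-1-\log\pi.
\end{align*}
Subtracting this from the definition of $F[u]$ gives
\begin{align*}
F[u] \ \ge\ \frac{8\pi-m(t)}{2m(t)^2}\int uc\,dx \ - \ 1 \ - \ \log\pi.
\end{align*}
Since the hypothesis $m_0<M_0$ forces $m(t)$ to be strictly increasing with $m(t)<M_0\le 8\pi$, the coefficient $\frac{8\pi-m(t)}{2m(t)^2}$ is nonnegative; combining with \er{Fu} (whose two integral terms on the left are nonnegative) therefore bounds $\int uc\,dx$ and, plugging back into the log-HLS estimate, also bounds $\int u|\log u|\,dx$ on every finite time interval.

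To close the estimates I would then control the second moment and higher $L^p$ norms. A direct computation using $\Delta|x|^2=4$ together with the standard symmetrization of the double integral arising from $\int|x|^2\nabla\cdot(u\nabla c)\,dx$ gives
\begin{align*}
\frac{d}{dt}\int|x|^2 u\,dx \ =\ 4m(t)\ -\ \frac{m(t)^2}{2\pi}\ +\ (M_0-m(t))\int|x|^2 u\,dx,
\end{align*}
and Gronwall delivers the claimed second-moment bound of the form $C(e^t,\int|x|^2 u_0\,dx)$. For $L^p$ estimates with $1<p<\infty$, multiplying \er{fisherks} by $u^{p-1}$ produces a dissipation $\int|\nabla u^{p/2}|^2\,dx$ which, via Gagliardo-Nirenberg interpolation and the $L\log L$ bound of the previous paragraph, dominates the chemotactic contribution $\frac{p-1}{p}\int u^{p+1}\,dx$ as in \cite{bp07,BCM08}; the reaction term contributes only a Gronwall factor $e^{pM_0 t}$, producing no new singular mechanism. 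All formal calculations are made rigorous by regularizing the Poisson kernel (or mollifying $u_0$), yielding smooth approximate solutions satisfying the above bounds uniformly, and a standard Aubin-Lions compactness argument passes to the limit.

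The hard part is the critical case $M_0=8\pi$: as $t\to\infty$ one has $m(t)\to 8\pi$, so the prefactor $\frac{8\pi-m(t)}{2m(t)^2}$ in the log-HLS bound degenerates and the constants from the entropy and $L^p$ estimates blow up. Nevertheless, for every finite $T>0$ the hypothesis $m_0<M_0=8\pi$ guarantees $m(T)<8\pi$ strictly, so all estimates close with $T$-dependent constants of the form $C(e^T,\cdot)$; this is precisely the $e^t$-dependence appearing in the theorem statement, and it is compatible with (and eventually complemented by) the infinite-time blow-up analysis postponed to Section \ref{sec3}.
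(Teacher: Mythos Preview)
Your proposal is correct and follows essentially the paper's strategy: free-energy monotonicity (you route it through the $\rho$-formulation, the paper works directly on a regularized $u_\varepsilon$, but the functionals coincide), log-HLS to extract the entropy bound from $F[u]\le F[u_0]$ when $m(t)<8\pi$, the second-moment identity by direct computation plus Gronwall, $L^p$ bounds, and regularization together with Aubin--Lions compactness to produce the weak solution.

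Two technical points deserve tightening. First, in the $L^p$ step, testing with $u^{p-1}$ and applying Gagliardo--Nirenberg yields $\int u^{p+1}\le C\|\nabla u^{p/2}\|_2^2\,\|u\|_1$, but $\|u\|_1=m(t)$ is of order $8\pi$ and cannot be absorbed into the diffusion. The paper therefore tests with $p(u-K)_+^{p-1}$ and feeds the $L\log L$ bound in through equi-integrability, namely $\int_{\{u>K\}}(u-K)_+\le\int_{\{u>K\}}u\le(\log K)^{-1}\int u|\log u|$, so that for $K=K(t)$ large the GNS prefactor is small and the dissipation dominates; this is the precise mechanism by which ``the $L\log L$ bound of the previous paragraph'' enters, and it is not captured by a bare $u^{p-1}$ multiplier. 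Second, passing the energy inequality \er{Fu} from the regularized problem to the weak solution is not purely Aubin--Lions: it requires weak lower semicontinuity of $\int u\log u$ and of the Fisher-type dissipation, together with \emph{strong} convergence of $\int u_\varepsilon c_\varepsilon\to\int uc$; the paper devotes its Step~10 and an appendix lemma to these points.
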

\begin{proof}
\noindent We decompose the proof into two parts. In Steps 1-8, a regularized equation is constructed and a priori estimates are established to obtain the global existence of a weak solution to \er{fisherks}. Furthermore, Steps 9-10 give the second moment and the energy inequality of the weak solution.

Following the method of \cite{JLV1}, we take a cut-off function $0
\le \psi_1(x) \le 1 $
\begin{align*}
\psi_1(x)=\left\{
      \begin{array}{ll}
      1 & \mbox{ if } |x|\le 1, \\
      0  & \mbox{ if } |x|\ge 2,
      \end{array}\right.
\end{align*}
where $\psi_1(x) \in C_0^\infty(\R^d)$. Define
\begin{align}\label{cutoff}
\psi_R(x):=\psi_1(x/R),
\end{align}
as $R \to \infty, ~\psi_R \to 1$, then
there exist constants $C_1,C_2$ such that $ |\nabla \psi_R(x)| \le
\frac{C_1}{R},~~|\Delta \psi_R(x)| \le \frac{C_2}{R^2}.$ This cut-off function will be used to derive the
existence of the weak solution. \\

\noindent {\it\textbf{Step 1}} (Approximated problem) \quad In order to show the existence of a weak solution, we firstly consider the regularized problem for $\varepsilon>0$
\begin{align}\label{regularization}
\left\{
  \begin{array}{ll}
    \partial_t u_\varepsilon=\Delta u_\varepsilon-\nabla \cdot (u_\varepsilon c_\varepsilon)+u_\varepsilon \left( M_0-\int_{\R^2} u_\varepsilon dx \right), &  x\in \R^2,t \ge 0,\\
    -\Delta c_\varepsilon=J_\varepsilon \ast u_\varepsilon, & x\in \R^2,t \ge 0,\\
    u_\varepsilon(x,0)=u_{0\varepsilon}(x), & x\in \R^2.
  \end{array}
\right.
\end{align}
Here $J_\varepsilon$ is the regularizing kernel
\begin{align}
J_\varepsilon(x)=\frac{1}{\pi} \frac{\varepsilon^2}{\left( |x|^2+\varepsilon^2 \right)^2}
\end{align}
with $\int_{\R^2} J_\varepsilon dx=1.$ Simple computations show that
\begin{align}
c_\varepsilon(x,t)=-\frac{1}{4\pi} \int_{\R^2} \log \left( |x-y|^2+\varepsilon^2 \right) u_\varepsilon(y,t)dy.
\end{align}
The regularized initial data $u_{0\varepsilon}\in C^\infty(\R^2)$ is a sequence of approximation for $u_0(x)$ and satisfies
\begin{align}\label{u0eps}
\left\{
  \begin{array}{ll}
    \|u_{0\varepsilon}\|_{L^p(\R^2)} \le \|u_0\|_{L^p(\R^2)}~~ \mbox{for~any}~~1 \le p \le \infty, \\[1mm]
  u_{0\varepsilon}(x) \to u_0(x)~~ \mbox{in}~~ L^q(\R^2)~~ \mbox{for}~~1\le q <\infty, \\[1mm]
 \int_{\R^2} |x|^2 u_{0\varepsilon}(x) dx \to \int_{\R^2} |x|^2 u_{0}(x) dx~~\mbox{as}~~\varepsilon \to 0, \\[1mm]
\int_{\R^2} u_{0\varepsilon} |\log u_{0\varepsilon}| dx \to \int_{\R^2}  u_{0} |\log u_{0}|dx~~\mbox{as}~~\varepsilon \to 0. \\
  \end{array}
\right.
\end{align}
Then following the standard parabolic theory, the system on $(u_\varepsilon,c_\varepsilon)$ admits a unique smooth solution with fast decay in space, when the initial data is regularized and truncated. Moreover, there exists $T>0$ such that for $0<t<T$ and any $1 \le r \le \infty$
\begin{align*}
u_\varepsilon \in L^\infty(0,T;L^r(\R^2)).
\end{align*}

\noindent{\it\textbf{Step 2}} (Global $L^1$ norm of $u_\varepsilon$ and the boundedness of the second moment) \quad In this step, we will provide some a priori estimates for $u_\varepsilon$. Firstly, multiplying \er{regularization} with any test function $\psi_R(x)$ gives
\begin{align}\label{psiR}
&\frac{d}{dt} \int_{\R^2} u_\varepsilon(\cdot,t)\psi_R(x)dx \nonumber\\
=&\int_{\R^2} u_\varepsilon \Delta \psi_R(x) dx -\frac{1}{4\pi} \iint_{\R^2 \times \R^2} \frac{[\nabla \psi_R(x)-\nabla \psi_R(y)]\cdot (x-y)}{|x-y|^2+\varepsilon^2} u_\varepsilon(x,t)u_\varepsilon(y,t)dxdy \nonumber\\
&+\int_{\R^2} u_\varepsilon \psi_R(x) dx \left( M_0-\int_{\R^2}u_\varepsilon dx \right).
\end{align}
Letting $\psi_R(x)$ be defined as in \er{cutoff}, then
\begin{align*}
&\left| \int_{\R^2} u_\varepsilon \Delta \psi_R(x) dx \right| \le \frac{C}{R^2} \int_{R^2} u_\varepsilon dx, \\
&\left|\iint_{\R^2 \times \R^2} \frac{[\nabla \psi_R(x)-\nabla \psi_R(y)]\cdot (x-y)}{|x-y|^2+\varepsilon^2} u_\varepsilon(x,t)u_\varepsilon(y,t) dxdy\right| \le \frac{C}{R^2} \left(\int_{R^2} u_\varepsilon dx\right)^2.
\end{align*}
Passing to the limit $R \to \infty$ arrives at
\begin{align}
\frac{d}{dt} \int_{\R^2} u_\varepsilon dx =\int_{\R^2} u_\varepsilon dx \left( M_0-\int_{\R^2}u_\varepsilon dx \right)
\end{align}
and we compute that for any $t>0$
\begin{align}\label{mmt}
m(t):=\int_{\R^2} u_\varepsilon dx=\frac{M_0}{1+ \frac{M_0-m_0}{ m_0} e^{-M_0 t}}.
\end{align}

Now we will prove that the second moment is bounded in time provided the bounded initial second moment $\int_{\R^2} u_\varepsilon(x,0) dx$. Consider a test function $\psi_R(x) \in C_0^\infty(\R^2)$ that grows nicely to $|x|^2$ as $R \to \infty$. We can infer from \er{psiR} that $\int_{\R^2} u_\varepsilon(\cdot,t)\psi_R(x)dx$ is uniformly bounded, thus we may pass to the limit using the Lebesgue monotone convergence theorem to obtain
\begin{align*}
\frac{d}{dt} \int_{R^2} |x|^2 u_\varepsilon dx=&4 \int_{\R^2} u_\varepsilon dx-\frac{1}{2\pi} \iint_{\R^2 \times \R^2} \frac{|x-y|^2}{|x-y|^2+\varepsilon^2} u_\varepsilon(x,t)u_\varepsilon(y,t)
 dxdy \\
& +\int_{\R^2}|x|^2 u_\varepsilon dx \left( M_0-\int_{\R^2}u_\varepsilon dx \right) \\
\le & 4 \int_{\R^2} u_\varepsilon dx+M_0 \int_{R^2} |x|^2 u_\varepsilon dx.
\end{align*}
Then one gets by integrating from $0$ to $t$ in time
\begin{align}\label{second2}
\int_{R^2} |x|^2 u_\varepsilon dx \le C\left(e^t,\int_{\R^2}|x|^2 u_\varepsilon(x,0)dx \right).
\end{align}

\noindent{\it\textbf{Step 3}} (Boundedness of $\int_{\R^2} u_\varepsilon |\log u_\varepsilon|dx$ for $M_0 \le 8\pi$) \quad Recalling \er{mmt} we shall use the notation
\begin{align}
F(u_\varepsilon)=\int_{\R^2} \frac{u_\varepsilon}{m(t)} \log \frac{u_\varepsilon}{m(t)} dx-\frac{1}{2}\int_{\R^2} \frac{u_\varepsilon}{m(t)} c_\varepsilon dx.
\end{align}
Then a straightforward computation shows that
\begin{align}\label{Fueps}
\frac{d}{dt}F(u_\varepsilon)=-\int_{\R^2} \frac{u_\varepsilon}{m(t)} \left| \nabla\left( \log \frac{u_\varepsilon}{m(t)} -c_\varepsilon \right) \right|^2 dx-\frac{M_0-m(t)}{2m(t)} \int_{\R^2} u_\varepsilon c_\varepsilon dx.
\end{align}
Thus for $m_0<M_0,$ differentiating it with respect to $t$ yields
\begin{align}
F(u_\varepsilon) \le F(u_{0\varepsilon}).
\end{align}
It can be checked that
\begin{align}\label{uepslogueps}
\int_{\R^2} u_\varepsilon \log u_\varepsilon dx -\frac{1}{2} \int_{\R^2} u_\varepsilon c_\varepsilon dx \le m(t)F(u_{0\varepsilon})+m(t) \log m(t).
\end{align}
Hence from \er{logHLS} we obtain the inequality
\begin{align}
& \int_{\R^2} u_\varepsilon \log u_\varepsilon dx \nonumber \\
\le & m(t)F(u_{0\varepsilon})+m(t) \log m(t)  -\frac{1}{8\pi} \iint_{\R^2 \times \R^2} u_\varepsilon(x,t) u_\varepsilon(y,t) \log (|x-y|^2+\varepsilon^2) dxdy \nonumber \\
\le & m(t)F(u_{0\varepsilon})+m(t) \log m(t)-\frac{1}{4\pi} \iint_{\R^2 \times \R^2} u_\varepsilon(x,t) u_\varepsilon(y,t) \log |x-y|  dxdy \nonumber \\
\le & m(t)F(u_{0\varepsilon})+m(t) \log m(t)-\frac{m^2(t)}{8 \pi} \left(\log m(t)-1-\log \pi \right)+\frac{m(t)}{8\pi} \int_{\R^2} u_\varepsilon \log u_\varepsilon dx, \label{qingyun}
\end{align}
For $m(t)<M_0 \le  8\pi,$ it follows from \er{mmt} that $\int_{\R^2} u_\varepsilon \log u_\varepsilon dx$ can be estimated respectively
\begin{align}\label{ulogu}
\int_{\R^2} u_\varepsilon \log u_\varepsilon dx \le \left\{
                                                      \begin{array}{ll}
                                                      \frac{8\pi M_0}{m_0(8\pi-M_0)} ~C\left( F(u_{0\varepsilon}),M_0,m_0 \right), & \mbox{for}~M_0<8\pi, \\[2mm]
                                             \frac{8\pi \left( m_0 e^{M_0 t}+M_0-m_0  \right)}{8\pi (M_0-m_0)} ~C\left( F(u_{0\varepsilon}),M_0,m_0 \right), & \mbox{for}~M_0=8\pi.
                                                      \end{array}
                                                    \right.
\end{align}
On the other hand, the following holds
\begin{align}
&1_{ \{\varphi(x)\le 1\} } \varphi(x) \left|\log \varphi(x)\right| \\
=& 1_{ \{\varphi(x)<e^{-2|x|^2} \}} \varphi(x) \log \frac{1}{\varphi(x)} +1_{\{e^{-2|x|^2} \le \varphi(x)\le 1\} } \varphi(x) \log \frac{1}{\varphi(x)} \nonumber\\
\le & 1_{\{\varphi(x)<e^{-2|x|^2} \} } \sqrt{\varphi(x)} +1_{ \{e^{-2|x|^2} \le \varphi(x)\le 1 \} } 2 \varphi(x)|x|^2 \nonumber \\
\le & e^{-|x|^2} +2\varphi(x)|x|^2. \label{xiaoyu1}
\end{align}
Hence combining \er{ulogu} with \er{second2} it gives our result for this step
\begin{align}\label{absulogu}
\int_{\R^2} u_\varepsilon \left|\log u_\varepsilon\right|dx & = \int_{\R^2} u_\varepsilon \log u_\varepsilon dx+2\int_{\R^2} 1_{\{u_\varepsilon \le 1\} } u_\varepsilon \left|\log u_\varepsilon  \right| dx \nonumber \\
& \le \int_{\R^2} u_\varepsilon \log u_\varepsilon dx+2 \int_{\R^2} e^{-|x|^2} dx+4 \int_{\R^2}|x|^2 u_\varepsilon dx \nonumber \\
& \le C_1+C_2(e^t).
\end{align}

\noindent{\it\textbf{Step 4}} (Equi-integrabiliy in $L^p$ norms for $1< p<\infty$) \quad Testing \er{regularization} with $p(u_\varepsilon-K)_+^{p-1}$ we obtain
\begin{align}\label{uK}
&\frac{d}{dt} \int_{\R^2} (u_\varepsilon-K)_+^p dx+\frac{4(p-1)}{p} \int_{\R^2} \left|\nabla (u_\varepsilon-K)_+^{p/2} \right|^2 dx \nonumber\\
=& (p-1) \int_{\R^2} J_\varepsilon \ast u_\varepsilon (u_\varepsilon-K)_+^p dx+Kp\int_{\R^2} J_\varepsilon \ast u_\varepsilon (u_\varepsilon-K)_+^{p-1} dx+p \int_{\R^2} (u_\varepsilon-K)_+^{p-1}u_\varepsilon dx \left(M_0-\int_{\R^2} u_\varepsilon dx \right) \nonumber \\
=&(p-1) \int_{\R^2} J_\varepsilon \ast (u_\varepsilon-K+K) (u_\varepsilon-K)_+^p dx+Kp\int_{\R^2} J_\varepsilon \ast (u_\varepsilon-K+K) (u_\varepsilon-K)_+^{p-1} dx \nonumber \\
& +p \int_{\R^2} (u_\varepsilon-K)_+^{p-1}(u_\varepsilon-K+K) dx \left(M_0-\int_{\R^2} u_\varepsilon dx \right) \nonumber \\
\le& (p-1) \int_{\R^2} (u_\varepsilon-K)_+^{p+1} dx+K(2p-1)\int_{\R^2} (u_\varepsilon-K)_+^p dx+K^2 p \int_{\R^2} (u_\varepsilon-K)_+^{p-1} dx\nonumber \\
& +M_0 p \int_{\R^2}(u_\varepsilon-K)_+^p dx+KpM_0\int_{\R^2} (u_\varepsilon-K)_+^{p-1} dx.
\end{align}
We use GNS inequality to get
\begin{align}\label{11}
\int_{\R^2}(u_\varepsilon-K)_+^{p+1} dx\le C(p) \int_{\R^2} \left|\nabla (u_\varepsilon-K)_+^{p/2} \right|^2 dx \int_{\R^2} (u_\varepsilon-K)_+ dx.
\end{align}
By \er{absulogu} one has
\begin{align}\label{22}
\int_{u_\varepsilon>K} (u_\varepsilon-K)_+dx \le \int_{u_\varepsilon>K} u_\varepsilon dx \le \frac{\int_{u_\varepsilon>K} u_\varepsilon |\log u_\varepsilon| dx}{\log K} \le \frac{\int_{\R^2} u_\varepsilon |\log u_\varepsilon| dx}{\log K} \le \frac{C(t)}{\log K}.
\end{align}
Moreover, $\int_{\R^2} (u_\varepsilon-K)_+^{p-1} dx$ can be estimated as
\begin{align}\label{33}
\int_{\R^2} (u_\varepsilon-K)_+^{p-1}dx &=\int_{K\le u_\varepsilon \le K+1 } (u_\varepsilon-K)_+^{p-1}dx+\int_{u_\varepsilon>K+1} (u_\varepsilon-K)_+^{p-1}dx \nonumber\\
& \le \int_{K\le u_\varepsilon \le K+1} 1 dx+ \int_{u_\varepsilon>K+1} (u_\varepsilon-K)_+^{p}dx \nonumber \\
& \le \frac{\int_{\R^2}u_\varepsilon dx}{K}+\int_{\R^2} (u_\varepsilon-K)_+^{p}dx.
\end{align}
Collecting \er{11}, \er{22} and \er{33} we can go further from \er{uK} that
\begin{align}
& \frac{d}{dt} \int_{\R^2} (u_\varepsilon-K)_+^p dx+\frac{4(p-1)}{p} \int_{\R^2} \left|\nabla (u_\varepsilon-K)_+^{p/2} \right|^2 dx \nonumber\\
\le & C(p) \frac{C(t)}{\log K}\int_{\R^2} \left|\nabla (u_\varepsilon-K)_+^{p/2} \right|^2 dx +C(K,p) \frac{\int_{\R^2}u_\varepsilon dx}{K}+C(K,p) \int_{\R^2} (u_\varepsilon-K)_+^p dx.
\end{align}
Choosing $K$ large enough such that $\frac{C(p)C(t)}{\log K}<\frac{4(p-1)}{p}$, we obtain
\begin{align}\label{nablaup}
\frac{d}{dt} \int_{\R^2} (u_\varepsilon-K)_+^p dx +C(p,K) \int_{\R^2} \left|\nabla (u_\varepsilon-K)_+^{p/2} \right|^2 dx  \le C_1+C_2 \int_{\R^2} (u_\varepsilon-K)_+^p dx
\end{align}
and subsequently for any $0<t<T$
\begin{align}\label{uKp}
\int_{\R^2} (u_\varepsilon-K)_+^p dx \le C\left(e^t,\int_{\R^2} u_{0\varepsilon}^p dx \right).
\end{align}

\noindent{\it\textbf{Step 5}} (Boundedness of $L^p$ norm for $1< p \le \infty$) \quad Now we can go further to prove that $\int_{\R^2} u_\varepsilon^p dx$ is bounded as follows.
\begin{align}
\int_{\R^2} u_\varepsilon^p dx =& \int_{u_\varepsilon \le K} u_\varepsilon^p dx+\int_{u_\varepsilon>K} u_\varepsilon^p dx \nonumber \\
\le & K^{p-1} \int_{\R^2}u_\varepsilon dx+\int_{u_\varepsilon>K} (u_\varepsilon-K+K)^{p-1}u_\varepsilon dx \nonumber \\
\le & K^{p-1}M_0+\max(2^{p-2},1) \left( \int_{u_\varepsilon>K} (u_\varepsilon-K)^{p-1}u_\varepsilon dx+K^{p-1}M_0 \right) \nonumber \\
\le & K^{p-1}M_0+\max(2^{p-2},1) \left( K \int_{\R^2} (u_\varepsilon-K)^{p-1}dx+\int_{\R^2} (u_\varepsilon-K)^{p}dx  +K^{p-1}M_0 \right) \nonumber \\
\le & K^{p-1}M_0+\max(2^{p-2},1) \left(\int_{\R^2}u_\varepsilon dx+ K \int_{\R^2} (u_\varepsilon-K)^{p}dx+\int_{\R^2} (u_\varepsilon-K)^{p}dx  +K^{p-1}M_0 \right), \nonumber
\end{align}
where the last line is given by \er{33}. Therefore, \er{uKp} guarantees that
for any $0<t<T$
\begin{align}\label{Lp}
\int_{\R^2} u_\varepsilon^p dx \le C(M_0,e^t,K,p)
\end{align}
for any $1<p<\infty.$ Furthermore, integrating \er{nablaup} from $0$ to $T$ we have that for any $T>0$
\begin{align}\label{nablaLp}
\nabla u_\varepsilon^{\frac{p}{2}} \in L^2\left(0,T;L^2(\R^2)\right)~~\mbox{for~any}~~1<p<\infty.
\end{align}
Finally, making use of the iterative method in the spirit of \cite{BL14} we have the uniformly boundedness
\begin{align}\label{Linfinity}
u_\varepsilon \in L^\infty\left(0,T;L^\infty(\R^2)\right).
\end{align}

\noindent{\it\textbf{Step 6}} (Time regularity) \quad It directly follows from \er{Lp}, \er{nablaLp} and \er{Linfinity} that
\begin{align}
& \|\nabla u_\varepsilon^r \|_{L^2(0,T;L^2(\R^2))} \le C, \quad \mbox{for~any}~~r > 1/2, \\
& \|u_\varepsilon \nabla c_\varepsilon\|_{L^\infty(0,T;L^\infty(\R^2))} \le C,  \label{uc}\\
& \| \partial_t u_\varepsilon \|_{L^2(0,T;H^{-1}(\R^2))} \le C. \label{ut}
\end{align}
Then by Lemma 4.23 in \cite{CY19} one has that for any bounded domain $\Omega$,
there exists a subsequence $u_\varepsilon$ without relabeling such that
\begin{align}
u_\varepsilon \to u~~\mbox{in}~~L^2\left(0,T;L^{\bar{p}}(\Omega)\right),\quad \mbox{for~any}~1 \le \bar{p}<\infty.
\end{align}
By a standard diagonal argument, the following uniform strong convergence holds true that for any $R>0$
\begin{align}\label{123456}
u_\varepsilon \to u ~~\mbox{in}~~ L^2\left(0,T;L^{\bar{p}}(B_R)\right).
\end{align}

Let us turn back to sketch the proof of \er{uc} and \er{ut}. Actually, the term
\begin{align}
\nabla c_\varepsilon=-\frac{1}{2\pi} \int_{\R^2} \frac{x-y}{|x-y|^2+\varepsilon^2} u_\varepsilon(y,t) dy
\end{align}
can be estimated from the Young inequality that
\begin{align}
\|\nabla c_\varepsilon\|_{L^\infty(\R^2)} & \le \frac{1}{2\pi} \left\| \int_{0<|x-y|\le 1} \frac{u_\varepsilon(y,t)}{|x-y|} dy+ \int_{|x-y|> 1} \frac{u_\varepsilon(y,t)}{|x-y|} dy  \right\|_{L^\infty(\R^2)} \nonumber\\
& \le \frac{1}{2\pi}\left( \|u_\varepsilon\|_{L^\infty(\R^2)} \left\| \frac{1}{|x|} \right\|_{L^1(0<|x|\le 1)}+\|u_\varepsilon\|_{L^1(\R^2)} \right) \nonumber \\
& \le C \left( \|u_\varepsilon\|_{L^\infty(\R^2)}+\|u_\varepsilon\|_{L^1(\R^2)} \right).
\end{align}
The bound of $u_\varepsilon \nabla c_\varepsilon \in L^\infty(0,T;L^\infty(\R^2))$ follows by using $u_\varepsilon \in L^\infty(0,T;L^1(\R^2))$.

Moreover, \er{ut} can be deduced by using the previous estimates that for any test function $\psi \in L^2\left(0,T;H^1(\R^2)\right)$
\begin{align}
\left| \int_0^T \int_{\R^2} \partial_t u_\varepsilon ~\psi dxdt \right| & \le \int_0^T \int_{\R^2} \left|\nabla u_\varepsilon \cdot \nabla \psi\right|dxdt+
\int_0^T \int_{\R^2} \left|u_\varepsilon \nabla c_\varepsilon \cdot \nabla \psi  \right|  dxdt \nonumber \\
& \le \left( \|\nabla u_\varepsilon\|_{L^2(0,T;\R^2)}+\|\sqrt{u_\varepsilon} \nabla c_\varepsilon\|_{L^\infty(0,T;L^\infty(\R^2))} \sqrt{T \|u_\varepsilon\|_{L^1(\R^2)}} ~\right) \|\nabla \psi\|_{L^2(0,T;L^2(\R^2))} \nonumber\\
& \le C \|\nabla \psi\|_{L^2(0,T;L^2(\R^2))}.
\end{align}

\noindent{\it\textbf{Step 7}} (Strong convergence of $u_\varepsilon$) \quad Furthermore, we will take advantage of the second moment estimate \er{second2} to establish the strong convergence of $u_\varepsilon$ in $L^2(0,T;L^q(\R^2))$ and extend \er{123456} to the whole space. We compute that for any $1 \le q<\infty$
\begin{align}\label{farfield}
\int_0^T \|u_\varepsilon\|_{L^q(|x|>R)}^2 dt & \le \int_0^T \|u_\varepsilon\|_{L^\infty(|x|>R)}^{2(q-1)/q} \|u_\varepsilon\|_{L^1(|x|>R)}^{2/q} dt \nonumber \\
& \le \int_0^T \|u_\varepsilon\|_{L^\infty(|x|>R)}^{2(q-1)/q} \frac{ \left(\int_{\R^2}|x|^2 u_\varepsilon dx\right)^{2/q} }{R^{4/q}} dt \to 0~~\mbox{as}~~R \to \infty,
\end{align}
and the weak semi-continuity of $L^2\left(0,T;L^q(\R^2)\right)$ implies
\begin{align}\label{semi}
\int_0^T \|u \|_{L^q(|x|>R)}^2 dt \le \displaystyle \liminf_{\varepsilon \to 0}  \int_0^T \|u_\varepsilon\|_{L^q(|x|>R)}^2 dt \to 0 ~~\mbox{as}~~R \to \infty.
\end{align}
Therefore, the following inequality is derived that for any $1 \le q<\infty,$ as $R \to \infty, \varepsilon \to 0,$
\begin{align}
&\int_0^T \|u_\varepsilon-u\|_{L^q(\R^2)}^2 dt=\int_0^T \left( \|u_\varepsilon-u\|_{L^q(|x|>R)}+\|u_\varepsilon-u\|_{L^q(|x|\le R)}  \right)^2 dt \nonumber \\
\le & ~C(q) \left( \int_0^T \|u_\varepsilon\|_{L^q(|x|>R)}^2+\int_0^T \|u\|_{L^q(|x|>R)}^2 dt+\int_0^T \|u_\varepsilon-u\|_{L^q(|x|\le R)}^2 dt    \right) \to 0.
\end{align}
In the last inequality, the first term goes to zero due to \er{farfield}, the second term is given by \er{semi} and \er{123456} provides the third term. Hence one has
\begin{align}\label{L1strong}
u_\varepsilon \to u ~~\mbox{in}~~ L^2\left(0,T;L^{r}(\R^2)\right),\quad \mbox{for~any}~1\le r<\infty.
\end{align}

\noindent{\it\textbf{Step 8}} (Existence of the weak solution) \quad
Now multiplying \er{regularization} by $\psi \in C_0^\infty(\R^2)$ and integrating it with respect to $x$ and $t$, we get the weak formulation for $u_\varepsilon$
\begin{align}\label{weaksolution}
& \int_{\R^2} \psi u_\varepsilon(\cdot,t) dx-\int_{\R^2} \psi u_{0\varepsilon}dx \nonumber \\
= & \int_0^T \int_{\R^2} \Delta \psi u_\varepsilon dxdt
-\frac{1}{4\pi} \iint_{\R^2 \times \R^2} \frac{[\nabla \psi(x)-\nabla \psi(y)]\cdot(x-y)}{|x-y|^2+\varepsilon^2} u_\varepsilon(x,t) u_\varepsilon(y,t) dxdydt \nonumber \\
& +\int_0^T \int_{\R^2} u_\varepsilon(\cdot,t) \psi(x) dx \left(M_0-\int_{\R^2} u_\varepsilon(\cdot,t) dx \right) dt.
\end{align}
It's to be noticed that \er{123456} directly yields
\begin{align}\label{ubdd}
\int_0^T \int_{\R^2} \Delta \psi u_\varepsilon dxdt \to \int_0^T \int_{\R^2} \Delta \psi u dxdt,\quad \varepsilon \to 0.
\end{align}
As to the second term of the right side of \er{weaksolution}, note that
\begin{align}
&\left| \int_0^T \iint_{\R^2\times \R^2} [\nabla \psi(x)-\nabla \psi(y)]\cdot(x-y) \left( \frac{1}{|x-y|^2}-\frac{1}{|x-y|^2+\varepsilon^2} \right) u_\varepsilon(x,t)u_\varepsilon(y,t) dxdydt \right| \nonumber\\
\le &C \varepsilon \int_0^T \iint_{\R^2 \times \R^2} \frac{u_\varepsilon(x,t)u_\varepsilon(y,t)}{|x-y|}dxdydt \le C \varepsilon \int_0^T \|u_\varepsilon\|_{L^{4/3}(\R^2)}^2 dt \le C \varepsilon. \label{eps2}
\end{align}
In addition, by Cauchy inequality we have
\begin{align}
&\left| \int_0^T \iint_{\R^2 \times \R^2} [\nabla \psi(x)-\nabla \psi(y)]\cdot(x-y) \left( \frac{u_\varepsilon(x,t)u_\varepsilon(y,t)}{|x-y|^2}-\frac{u(x,t)u(y,t)}{|x-y|^2}  \right) dxdy dt \right| \nonumber \\
\le& C \left( \int_0^T \iint_{\Omega \times \Omega} |u_\varepsilon(x)-u(x)|u_\varepsilon(y) dxdy+\iint_{\Omega \times \Omega} |u_\varepsilon(y)-u(y)|u(x) dxdydt  \right) \nonumber \\
\le &C \int_0^T \|u_\varepsilon-u\|_{L^2(\Omega)}^2 dt \int_0^T \|u_\varepsilon\|_{L^2(\Omega)}^2 dt, \label{appro}
\end{align}
where the last line is given by the semi-continuity of $\|u\|_{L^2(\R^2)} \le \displaystyle \liminf_{\varepsilon \to 0} \|u_\varepsilon\|_{L^2(\R^2)}$. Thus taking the limit $\varepsilon \to 0$ and combining \er{eps2} and \er{appro} we conclude
\begin{align}\label{epsappro}
\int_0^T \iint_{\R^2 \times \R^2} [\nabla \psi(x)-\nabla \psi(y)]\cdot(x-y) \left( \frac{u_\varepsilon(x,t)u_\varepsilon(y,t)}{|x-y|^2+\varepsilon^2}- \frac{u(x,t)u(y,t)}{|x-y|^2} \right) dxdydt \to 0.
\end{align}
Thanks to \er{ubdd}, \er{epsappro} and \er{L1strong}, passing to the limit $\varepsilon \to 0$ in \er{weaksolution} one has that for any $0<t<T$
\begin{align}\label{weak}
&\int_{\R^2} \psi u(\cdot,t) dx-\int_{\R^2} \psi u_{0}(x)dx \nonumber \\
= &\int_0^T \int_{\R^2} \Delta \psi u dxdt -\frac{1}{4\pi} \iint_{\R^2 \times \R^2} \frac{[\nabla \psi(x)-\nabla \psi(y)]\cdot(x-y)}{|x-y|^2} u(x,t) u(y,t) dxdydt \nonumber \\
& +\int_0^T \int_{\R^2} u(\cdot,t) \psi(x) dx \left(M_0-\int_{\R^2} u(\cdot,t) dx \right) dt.
\end{align}
This gives the existence of a global weak solution.

\noindent{\it\textbf{Step 9}} (The second moment of the weak solution) \quad
Consider a test function $\psi_R(x) \in C_0^\infty(\R^2)$ and $\psi_R(x)=|x|^2$ for $|x|<R, \psi_R(x)=0$ for $|x| \ge 2R,$ letting $\psi=\psi_R$ in \er{weak} arrives at
\begin{align}\label{secondweak1}
&\int_{\R^2} \psi_R u(\cdot,t) dx-\int_{\R^2} \psi_R u_{0}(x)dx \nonumber \\
= &\int_0^T \int_{\R^2} \Delta \psi_R u dxdt -\frac{1}{4\pi} \iint_{\R^2 \times \R^2} \frac{[\nabla \psi_R(x)-\nabla \psi_R(y)]\cdot(x-y)}{|x-y|^2} u(x,t) u(y,t) dxdydt \nonumber \\
& +\int_0^T \int_{\R^2} u(\cdot,t) \psi_R(x) dx \left(M_0-\int_{\R^2} u(\cdot,t) dx \right) dt.
\end{align}
Moreover, we follow the same lines as \er{psiR} to \er{mmt} to claim that
\begin{align}
\int_{\R^2} u(x,t) dx=m(t).
\end{align}
As before, since $\Delta \psi_R(x)$ and $\frac{[\nabla \psi_R(x)-\nabla \psi_R(y)]\cdot(x-y)}{|x-y|^2}$ are bounded, thus the first two terms in the right-hand side of \er{secondweak1} are bounded. As a consequence, as $R \to \infty$, we may pass to the limit using the Lebesgue monotone convergence theorem with $u \in L^1(\R^2)$ and obtain that for any $t>0$
\begin{align}\label{513}
\int_{\R^2} |x|^2 u(x,t) dx = &\int_{\R^2} |x|^2 u_0(x) dx+4\int_0^t m(s)ds-\frac{1}{2\pi} \int_0^t m^2(s) ds \nonumber \\
 & + \int_0^t \int_{\R^2} |x|^2 u(x,s) dx ( M_0-m(s)) ds.
\end{align}
Then by Gronwall's inequality we have
\begin{align}\label{secondweak}
\int_{\R^2} |x|^2 u(x) dx \le C.
\end{align}

\noindent{\it\textbf{Step 10}} (The energy inequality of the weak solution) \quad Integrating \er{Fueps} in time from $0$ to $t$ follows
\begin{align}\label{Fueps1}
&\frac{1}{m(t)} \int_{\R^2} u_\varepsilon \log u_\varepsilon dx-\frac{1}{2m(t)} \int_{\R^2} u_\varepsilon c_\varepsilon dx-\log m(t)+\int_0^t \int_{\R^2} \frac{u_\varepsilon}{m(s)} \left|\nabla \log u_\varepsilon-\nabla c_\varepsilon \right|^2 dxds \nonumber \\
+&\int_0^t \frac{M_0-m(s)}{2m(s)} \int_{\R^2} u_\varepsilon c_\varepsilon dxds = \frac{1}{m_0} \int_{\R^2} u_{0\varepsilon} \log u_{0\varepsilon} dx-\frac{1}{2m_0} \int_{\R^2} u_{0\varepsilon} c_{0\varepsilon} dx-\log m_0.
\end{align}
The aim of the final step is taking limit $\varepsilon \to 0$ in \er{Fueps1} to deduce the energy inequality \er{Fu}.

For the sake of passing limit in \er{Fueps1}, we split it into three parts. Firstly it is derived from Lemma \ref{ulogusemi} in the "Appendix" that
\begin{align}\label{logsemi}
\int_{\R^2} u(x,t) \log u(x,t) dx \le \displaystyle \liminf_{\varepsilon \to 0} \int_{\R^2} u_\varepsilon(x,t) \log u_\varepsilon(x,t) dx.
\end{align}
Secondly, the lower semi-continuity of the energy dissipation is followed from Lemma 4.9 of \cite{CY19} that for any $t>0$
\begin{align}\label{dissilower}
\int_0^t \frac{1}{m(s)} \int_{\R^2} \left|2 \nabla \sqrt{u}-\sqrt{u}\nabla c\right|^2 dxds \le \displaystyle \liminf_{\varepsilon \to 0} \int_0^t \frac{1}{m(s)} \int_{\R^2} \left|2 \nabla \sqrt{u_\varepsilon}-\sqrt{u_\varepsilon}\nabla c_\varepsilon\right|^2 dxds.
\end{align}
It remains to verify the strong convergence of $\int_{\R^2}u_\varepsilon c_\varepsilon dx$ that
\begin{align}\label{stronguc}
\int_{\R^2} u_\varepsilon(x,t) c_\varepsilon(x,t) dx \to \int_{\R^2} u(x,t) c(x,t) dx ~~\mbox{a.e.~in}~~(0,T).
\end{align}
We write
\begin{align}
& -4 \pi \int_{\R^2} u_\varepsilon(x,t) c_\varepsilon(x,t)-u(x,t)c(x,t) dx \nonumber\\
=& \iint_{\R^2 \times \R^2} u_\varepsilon(x,t) u_\varepsilon(y,t) \log \frac{|x-y|^2+\varepsilon^2}{|x-y|^2} dxdy +2 \iint_{\R^2 \times \R^2} u_\varepsilon(x,t) ( u_\varepsilon(y,t)-u(y,t)) \log|x-y| dxdy \nonumber \\
& +2\iint_{\R^2 \times \R^2} (u_\varepsilon(x,t)-u(x,t))u(y,t) \log |x-y| dxdy =:I_1+I_2+I_3. \label{I1I2I3}
\end{align}
For $I_1,$ we compute that for any $0<t<T$
\begin{align}\label{I426}
I_1=& \iint_{\{|x-y| \ge 1\}} u_\varepsilon(x,t) u_\varepsilon(y,t) \log \frac{|x-y|^2+\varepsilon^2}{|x-y|^2} dxdy+\iint_{\{|x-y| <1\}} u_\varepsilon(x,t) u_\varepsilon(y,t) \log \frac{|x-y|^2+\varepsilon^2}{|x-y|^2} dxdy \nonumber \\
\le &\iint_{\{|x-y| \ge 1\}} u_\varepsilon(x,t) u_\varepsilon(y,t) \log (1+\varepsilon^2) dxdy  \nonumber \\
& +C \|u_\varepsilon\|_{L^\infty(\R^2)} \|u_\varepsilon\|_{L^1(\R^2)} \int_0^1 r \log \frac{r^2+\varepsilon^2}{r^2} dr \to 0, ~~\mbox{as}~~\varepsilon \to 0.
\end{align}
Moreover, we deal with $I_2$ or $I_3$ as follows.
\begin{align*}
\frac{1}{2}|I_2| \le & \iint_{|x-y| \ge 1} u_\varepsilon(x,\cdot) |u_\varepsilon(y,\cdot)-u(y,\cdot)| \log|x-y| dxdy \\
& + \iint_{|x-y| < 1} u_\varepsilon(x,\cdot) |u_\varepsilon(y,\cdot)-u(y,\cdot)| \left|\log|x-y| \right| dxdy=:II_1+II_2.
\end{align*}
Here it can be estimated that as $\varepsilon \to 0$
\begin{align}
II_1 = & \iint_{|x-y| \ge 1} u_\varepsilon(x,\cdot) \left|u_\varepsilon(y,\cdot)-u(y,\cdot)\right|^{1/2} \left|u_\varepsilon(y,\cdot)-u(y,\cdot)\right|^{1/2} \log|x-y| dxdy \nonumber\\
\le & \iint_{|x-y| \ge 1} u_\varepsilon(x,\cdot)^{1/2} u_\varepsilon(x,\cdot)^{1/2} \left|u_\varepsilon(y,\cdot)-u(y,\cdot)\right|^{1/2} \left|u_\varepsilon(y,\cdot)-u(y,\cdot)\right|^{1/2} |x-y| dxdy  \nonumber \\
\le & C\left(\|u_\varepsilon\|_{L^1(\R^2)}\right) \|u_\varepsilon-u\|_{L^1(\R^2)}^{1/2} \int_{\R^2} u_\varepsilon(x)|x|^2 dx \to 0 \label{I1greater}
\end{align}
and
\begin{align}\label{I2less}
II_2 \le & \iint_{|x-y| < 1} u_\varepsilon(x,\cdot) \left|u_\varepsilon(y,\cdot)-u(y,\cdot)\right| \log \frac{1}{|x-y|} dxdy \nonumber \\
\le & C \|u_\varepsilon-u\|_{L^1(\R^2)} \|u_\varepsilon\|_{L^\infty(\R^2)} \int_0^1 r \log \frac{1}{r} dr \to 0.
\end{align}
Getting \er{I1greater} and \er{I2less} together and handling $I_3$ similarly yield that as $\varepsilon \to 0$,
\begin{align}
I_2 \to 0, \quad I_3 \to 0.
\end{align}
Thus \er{stronguc} can be obtained from \er{I1I2I3}.

Hence combining \er{logsemi}, \er{dissilower} and \er{stronguc}, letting $\varepsilon \to 0$ in \er{Fueps1} leads to
\begin{align*}
&\int_{\R^2} \frac{u(x,t)}{m(t)} \log \frac{u(x,t)}{m(t)} dx-\frac{1}{2m(t)} \int_{\R^2} u(x,t) c(x,t) dx+\int_0^t \int_{\R^2} \frac{u(x,s)}{m(s)} \left|\nabla \log u(x,s)-\nabla c(x,s) \right|^2 dxds \nonumber \\
+&\int_0^t \frac{M_0-m(s)}{2m(s)} \int_{\R^2} u(x,s) c(x,s) dxds \le \frac{1}{m_0} \int_{\R^2} u_{0}(x) \log u_{0}(x) dx-\frac{1}{2m_0} \int_{\R^2} u_{0}(x) c_{0} dx-\log m_0,
\end{align*}
which is the desired inequality \er{Fu} and thus completes the proof. \quad $\Box$
\end{proof}

\par
\begin{remark}
For $M_0<m_0<8\pi,$ then $M_0-m(t)<0$ for all $t>0.$ Therefore, $u(x,t)$ is a sub-solution of the keller-segel system $v_t=\Delta v-\nabla \cdot (v \nabla w),~-\Delta w=v$ which admits a global solution when $\int_{\R^2}v(x,0)dx <8\pi$. By the comparison principle, all solutions of \er{fisherks} exist globally.
\end{remark}

\subsection{Finite time blow-up for $M_0>8\pi$}\label{sec2}

In this subsection, we will show the blow-up statement with finite initial second moment. We start with the case $M_0>8\pi$ in which we use the standard argument relying on the evolution of the second moment of solutions as done in \cite{bdp06,bp07}. By passing to the limit from Steps 1-8 and adapting the argument \er{513} in Step 9 of Theorem \ref{globalexist} without further computation we obtain
\begin{lemma}[The Second Moment]
Assume  $\int_{R^2}|x|^2 u_{0}(x) dx < \infty$ and $u_0(x) \in L_+^1 \cap L^\infty(\R^2)$. Let $u(x,t)$ be the weak solution to system \er{fisherks}, then the the second moment satisfies
\begin{align}\label{m2t}
    m_2(t):=\int_{R^2}|x|^2 u(x,t) dx=m(t)\left[4t-\frac{1}{2\pi}\ln\left(\frac{m_0}{M_0}e^{M_0 t}+\frac{M_0-m_0}{M_0}\right)+\frac{\int_{R^2}|x|^2 u_{0}(x) dx}{m_0}\right]
\end{align}
where $m(t)=\left(\frac{1}{M_0}-\left( \frac{m_0-M_0}{M_0 m_0} \right)e^{-M_0 t}\right)^{-1}.$
\end{lemma}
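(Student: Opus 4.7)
The plan is to treat the identity (5.13) established in Step 9 of Theorem \ref{globalexist} as the starting point: differentiating in $t$ (which is justified since each term is an absolutely continuous function of $t$) converts it into the first-order linear ODE
\begin{align*}
\frac{d}{dt} m_2(t) - (M_0 - m(t))\, m_2(t) = 4 m(t) - \frac{1}{2\pi}\, m^2(t), \qquad m_2(0) = \int_{\R^2} |x|^2 u_0(x)\,dx.
\end{align*}
The whole proof then reduces to solving this linear ODE for $m_2$ with $m(t)$ known from \eqref{mt}.

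The key observation that makes the integrating factor explicit is that $m(t)$ itself satisfies $m'(t) = m(t)(M_0 - m(t))$, so $M_0 - m(t) = (\log m(t))'$ and hence
\begin{align*}
\exp\!\left(-\int_0^t (M_0 - m(s))\,ds\right) = \frac{m_0}{m(t)}.
\end{align*}
Multiplying the ODE by $m_0/m(t)$ collapses the left side into $\frac{d}{dt}\!\left(\frac{m_0}{m(t)} m_2(t)\right)$, and integrating from $0$ to $t$ gives
\begin{align*}
m_2(t) = m(t)\left[\frac{m_2(0)}{m_0} + 4t - \frac{1}{2\pi} \int_0^t m(s)\,ds\right].
\end{align*}

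The last step is to evaluate $\int_0^t m(s)\,ds$ in closed form. Writing $m(s) = M_0/(1 + \overline{C} e^{-M_0 s})$ with $\overline{C}=(M_0-m_0)/m_0$ and multiplying numerator and denominator by $e^{M_0 s}$ gives a primitive of the form $\log(e^{M_0 s} + \overline{C})$; after evaluation and simplification,
\begin{align*}
\int_0^t m(s)\,ds = \log\!\left(\frac{m_0}{M_0} e^{M_0 t} + \frac{M_0-m_0}{M_0}\right).
\end{align*}
Substituting into the formula above yields exactly \eqref{m2t}. I do not expect a serious obstacle here: the computation is essentially mechanical once the structural fact $(\log m)' = M_0 - m$ is exploited, and everything at the $u$-level has already been handled in Theorem \ref{globalexist} by the identity (5.13). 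The only mildly delicate point is ensuring that $m_2(t)$ is differentiable in $t$ so that the integral identity can be turned into the linear ODE — this follows because $m(t)$ and $m^2(t)$ are smooth and the right-hand side of (5.13) is the integral of an $L^1_{\mathrm{loc}}$ function of $t$, so $m_2$ is absolutely continuous and the identification is valid almost everywhere.
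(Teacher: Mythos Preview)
Your proposal is correct and follows exactly the route the paper intends: the paper's own ``proof'' merely invokes the integral identity \eqref{513} from Step~9 of Theorem~\ref{globalexist} and declares the formula follows ``without further computation,'' while you supply precisely that computation by recasting \eqref{513} as a linear ODE, exploiting $(\log m)'=M_0-m$ for the integrating factor, and evaluating $\int_0^t m(s)\,ds$ explicitly. There is no methodological difference---you have simply written out what the paper leaves implicit.
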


 We notice that the second moment evolution is more complicated than in the classical system corresponding to mass conservation where the time derivative of the second moment is a constant. An easy consequence of the previous lemma is the following blow-up result.

\begin{theorem}[Finite time blow-up for $M_0>8\pi$] \label{blowup}
Assume $\int_{\R^2} |x|^2 u_0(x) dx<\infty$.  If $M_0>8\pi$, then the solution of \er{fisherks} blows up in finite time and there exists a $0<T<\infty$ such that
\begin{align}
\displaystyle \limsup_{t \to T} \|u(\cdot,t)\|_{p}=\infty,~~\mbox{for any}~~p>1.
\end{align}
\end{theorem}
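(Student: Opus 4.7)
The plan is to exploit the explicit second-moment formula \er{m2t} to force a contradiction with the non-negativity of $u$, which is the classical virial-type argument of \cite{bdp06,bp07} adapted to the current nonlocal setting. Concretely, I would first observe that if $u(\cdot,t)\ge 0$ is a weak solution on some interval $[0,T)$, then necessarily $m_2(t)\ge 0$ on that interval. The work then reduces to showing that the right-hand side of \er{m2t} becomes strictly negative at some finite time whenever $M_0>8\pi$.

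To do this, I would study the asymptotic behavior of the bracket
\[
B(t):=4t-\frac{1}{2\pi}\ln\!\Bigl(\tfrac{m_0}{M_0}e^{M_0 t}+\tfrac{M_0-m_0}{M_0}\Bigr)+\tfrac{1}{m_0}\int_{\R^2}|x|^2 u_0 dx
\]
as $t$ grows. For large $t$ the logarithm is dominated by $\ln\!\bigl(\tfrac{m_0}{M_0}e^{M_0 t}\bigr)=M_0 t+\ln(m_0/M_0)$, so
\[
B(t)=\Bigl(4-\tfrac{M_0}{2\pi}\Bigr)t+O(1).
\]
Since $M_0>8\pi$ gives $4-M_0/(2\pi)<0$, the leading coefficient is strictly negative and $B(t)\to -\infty$. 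Because $m(t)>0$ for all $t$ (it is bounded between $\min(m_0,M_0)$ and $\max(m_0,M_0)$ by \er{mt}), we get $m_2(t)=m(t)B(t)\to-\infty$. Hence there exists a first finite time $T^\ast>0$ at which $m_2(T^\ast)=0$ and $m_2(t)<0$ for $t$ slightly larger than $T^\ast$, contradicting $m_2\ge 0$. Solving $B(T^\ast)=0$ explicitly (or just estimating the root of a strictly decreasing affine-plus-bounded function) furnishes an upper bound on $T^\ast$ in terms of $M_0$, $m_0$, and the initial second moment.

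Therefore the maximal time of existence $T$ of the weak solution constructed in the Theorem \ref{globalexist} framework must satisfy $T\le T^\ast<\infty$. To convert this into the stated $L^p$-blow-up for every $p>1$, I would argue by contraposition: if $\limsup_{t\to T}\|u(\cdot,t)\|_{L^p}<\infty$ for some $p>1$, then the local well-posedness / regularization scheme of Steps 1-8 in the proof of Theorem \ref{globalexist} (in particular the iteration leading to \er{Linfinity} combined with the parabolic smoothing) could be restarted from time $T-\delta$ and would extend the solution past $T$, contradicting maximality. Hence $\|u(\cdot,t)\|_{L^p}\to\infty$ as $t\to T$ for every $p>1$.

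The routine calculations are the asymptotic expansion of the logarithm and the root-finding that pins down $T^\ast$; the only mildly delicate point is the continuation argument, since one has to verify that the extra nonlocal reaction term $u(M_0-m(t))$ does not obstruct the standard criterion for breakdown in $L^p$. This is harmless because $M_0-m(t)$ is uniformly bounded in time by $M_0$, so the logistic term contributes only a Gronwall-type factor of the form $e^{M_0 t}$ to all the a priori estimates in Steps 4-5 of Theorem \ref{globalexist} and does not interfere with the continuation criterion.
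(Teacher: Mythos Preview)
Your virial half of the argument --- showing via the explicit formula \er{m2t} that the bracket $B(t)$ is asymptotically $(4-M_0/(2\pi))t+O(1)$ and hence eventually negative --- is exactly what the paper does; the paper just writes it as a pair of explicit linear upper bounds on $m_2(t)$ (one for $m_0\le M_0$, one for $m_0>M_0$) rather than as an asymptotic expansion, but the content is identical.

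Where you diverge is in the passage from ``$m_2$ hits zero at a finite time $T^\ast$'' to ``$\|u(\cdot,t)\|_{L^p}\to\infty$''. You invoke a continuation criterion: if some $L^p$ norm stayed bounded, the local theory of Steps~1--8 together with the iteration to $L^\infty$ would push the solution past $T^\ast$. This is a valid strategy, but it leans on a blow-up/continuation criterion that the paper never states explicitly, and you would still have to check that a single $L^p$ bound with $p>1$ really bootstraps to $L^\infty$ through the Moser-type iteration alluded to in \er{Linfinity}. The paper bypasses all of this with a one-line interpolation: splitting $\int u = \int_{|x|\le R} u + \int_{|x|>R} u$ and applying H\"older on the first piece and the second-moment bound on the second gives
\[
\|u\|_{L^1}\le C\,\|u\|_{L^p}^{\frac{2}{a+2}}\,m_2(t)^{\frac{a}{a+2}},\qquad a=\tfrac{2(p-1)}{p},
\]
after optimising in $R$. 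Since $\|u\|_{L^1}=m(t)$ is bounded below by $\min(m_0,M_0)>0$ and $m_2(t)\to 0$, this forces $\limsup_{t\to T^\ast}\|u\|_{L^p}=\infty$ directly, with no appeal to local well-posedness or extension. Your route is sound in principle but heavier; the paper's interpolation is both shorter and self-contained.
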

\begin{proof}
Our proof hinges on the second moment evolution. Firstly for $M_0>8\pi,$ the second moment $\int_{\R^2} |x|^2 u(x,t) dx$  can be estimated that
\begin{align}\label{5133}
\int_{\R^2} |x|^2 u(x,t) dx \le \left\{
                                                      \begin{array}{ll}
                                                      \left(4-\frac{M_0}{2\pi}\right)M_{0}t+\frac{M_0}{m_0}\int_{\R^2} |x|^2 u_0(x) dx-\frac{M_0}{2\pi}\ln\frac{m_0}{M_0}& \mbox{for}~m_0\le M_0, \\[2mm]
                              \left(4-\frac{M_0}{2\pi}\right)m_{0}t+\int_{\R^2} |x|^2 u_0(x) dx               , & \mbox{for}~m_0 > M_0.
                                                      \end{array}
                                                    \right.
\end{align}
Consider $M_0>8\pi$, it enables us to get that the second moment of the weak solution will become negative after some time and contradicts the non-negativity of $u$. Therefore, there is a $T^* > 0$ such that $\displaystyle \lim_{t\to T^*} m_2(t) = 0$, and using H\"older's inequality one has
\begin{align}
   \int_{\R^2} u(x,t) dx =\int_{|x|\le R} u(x,t) dx+\int_{|x|> R} u(x,t) dx \le C R^{2(p-1)/p}||u||_{L^p} + \frac{1}{R^2} m_2(t),~for~all~p>1.
\end{align}
Choosing $R=\left(\frac{Cm_2(t)}{||u||_{L^p}}\right)^{1/(a+2)}$ with $a=2(p-1)/p$, we obtain
\begin{align}
   ||u||_{L^1}\le C ||u||_{L^p}^{\frac{2}{a+2}}m_2(t)^{\frac{a}{a+2}},
\end{align}
which induces that
\begin{align}
     \limsup_{t\to T^*}||u||_{L^p}\ge \lim_{t\to T^*}\frac{||u(x,t)||_{L^1}^{\frac{a+2}{2}}}{Cm_2(t)^{\frac{a}{2}}} =\infty.
\end{align}
Thus  the proof is completed. \quad $\Box$
\end{proof}

In addition, we know that the solution exists globally for $m_0<M_0<8\pi$. On the contrary, we could obtain the finite time blow-up result for $M_0<8\pi<m_0$.
\begin{theorem}[Finite time blow-up for $M_0<8\pi<m_0$] \label{blowup2}
For $M_0 < 8\pi < m_0$, assume $\int_{\R^2} |x|^2 u_0(x) dx<C(M_0,m_0)$, where $C(M_0,m_0)$ is given by
\begin{align*}
    C(M_0,m_0):=-\frac{(8\pi-M_0)m_0}{2M_0\pi}\ln{\frac{m_0-M_0}{8\pi-M_0}}+\frac{4m_0}{M_0}\ln{\frac{m_0}{8\pi}},
\end{align*}
then the solution of \er{fisherks}  blows up in finite time.
\end{theorem}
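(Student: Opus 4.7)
The plan is to follow the blow-up scheme of Theorem~\ref{blowup} by driving the second moment of any weak solution into negative values in finite time, contradicting $u\ge 0$. The new difficulty compared with Theorem~\ref{blowup} is that here the mass $m(t)$ \emph{decreases} from $m_0>8\pi$ toward $M_0<8\pi$, so the ``supercritical'' window $\{t:m(t)>8\pi\}$ is only a bounded interval $[0,t^\ast)$; the hypothesis on $\int_{\R^2}|x|^2 u_0\,dx$ is precisely the budget one can afford to still crash $m_2$ to zero before that window closes. I will first isolate the non-sign-changing factor by writing $m_2(t)=m(t)\,A(t)$ with
\begin{equation*}
A(t):=4t-\frac{1}{2\pi}\ln\!\left(\frac{m_0}{M_0}e^{M_0 t}+\frac{M_0-m_0}{M_0}\right)+\frac{1}{m_0}\int_{\R^2}|x|^2 u_0(x)\,dx,
\end{equation*}
and note that $m(t)>0$ on the maximal interval of existence, so it is enough to show $A(\tau)<0$ at some finite $\tau$.

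The next step is to differentiate $A$ and recognise the resulting fraction as $m(t)/(2\pi)$; this yields the clean identity $A'(t)=(8\pi-m(t))/(2\pi)$. Since $m_0>M_0$ forces $m(t)$ to be strictly decreasing from $m_0>8\pi$ down to $M_0<8\pi$, there is a unique $t^\ast>0$ with $m(t^\ast)=8\pi$, and $A$ attains its global minimum there. Solving $m(t^\ast)=8\pi$ explicitly gives $t^\ast=\frac{1}{M_0}\ln\frac{8\pi(m_0-M_0)}{m_0(8\pi-M_0)}$; substituting into $A$ and using the simplification $m_0 e^{M_0 t^\ast}+M_0-m_0=M_0(m_0-M_0)/(8\pi-M_0)$ should collapse the logarithms into
\begin{equation*}
A(t^\ast)=\frac{1}{m_0}\!\left[\int_{\R^2}|x|^2 u_0\,dx-\frac{4m_0}{M_0}\ln\frac{m_0}{8\pi}+\frac{(8\pi-M_0)m_0}{2\pi M_0}\ln\frac{m_0-M_0}{8\pi-M_0}\right],
\end{equation*}
whose bracket is exactly $\int_{\R^2}|x|^2 u_0\,dx-C(M_0,m_0)$. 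The hypothesis then forces $A(t^\ast)<0$, so $m_2$ must reach zero at some $\tau\le t^\ast$, contradicting $u\ge 0$; hence the maximal existence time is no larger than $t^\ast$. Finally, the same $L^1$-versus-$L^p$ H\"older interpolation used at the end of Theorem~\ref{blowup} will upgrade this collapse of $m_2$ to $\limsup_{t\to T}\|u(\cdot,t)\|_{L^p}=\infty$ for every $p>1$.

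The main obstacle I anticipate is algebraic bookkeeping around $t^\ast$: the two logarithms must combine to reproduce exactly the stated constant $C(M_0,m_0)$, and a single sign error would either yield a vacuous condition or miss the sharp threshold. A conceptually subtler point, which I will flag in the write-up, is that in the present regime one cannot simply bound $m_2'(t)$ by a negative constant times $m(t)$ as in Theorem~\ref{blowup}: once $m(t)$ drops below $8\pi$ the second moment becomes \emph{dissipative}, so the argument genuinely needs the non-monotonic profile of $A$ and the explicit location $t^\ast$ where the mass crosses the critical threshold $8\pi$.
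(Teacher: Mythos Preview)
Your approach is correct and essentially identical to the paper's. The paper introduces $h(t):=m_2(t)/m(t)$, which is precisely your $A(t)$, computes $h'(t)=4-\tfrac{m(t)}{2\pi}$ (your $A'(t)=(8\pi-m(t))/(2\pi)$), observes that $h$ decreases to a global minimum when $m(t)=8\pi$, and states that this minimum equals $\int_{\R^2}|x|^2u_0\,dx-C(M_0,m_0)$ up to a positive factor; your write-up is in fact more explicit, since you locate $t^\ast$ and carry out the logarithmic simplification that the paper leaves to the reader.
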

\begin{proof}
Denote $h(t)=\frac{1}{m(t)} \int_{R^2}|x|^{2}u(x,t) dx$, from \er{m2t} we find,
\begin{align*}
   \frac{d h(t)}{dt}=4-\frac{m(t)}{2\pi}
     ,~~\frac{d^2 h(t)}{dt^2} =-\frac{m(t)\left(M_{0}-m(t)\right)}{2\pi}.
\end{align*}
For $M_0 < 8\pi < m_0$, we are able to find that $h(t)$ firstly decreases from the initial data and reaches the global minimum $\frac{\int_{\R^2}|x|^2 u_0(x) dx-C(M_0,m_0) }{8\pi}$. Thus we conclude that $m_{2}(t)$ should become negative in finite time when the initial second moment is small, namley $\int_{\R^2}|x|^2 u_0(x) dx<C(M_0,m_0) $. This suggests that the solution blows up in finite time as proved in Theorem \ref{blowup}. Therefore the blow-up statement for the case $M_0 < 8\pi < m_0$ is completed without further comment. \quad $\Box$
\end{proof}

\section{Infinite time blow-up for $m_0<M_0= 8\pi$}\label{sec3}
\def\theequation{3.\arabic{equation}}\makeatother
\setcounter{equation}{0}
\def\thetheorem{3.\arabic{theorem}}\makeatother
\setcounter{theorem}{0}

As mentioned earlier, the solution exists globally for $0<m_0<M_0=8\pi$. In this section, we will firstly show that nontrivial steady state solutions can exist only in the case $M_0=8\pi$. The infinite second moment of the steady states could serve to give a hint on the further proof of infinite time blow-up.

\subsection{Steady states}\label{31}

This subsection is primarily devoted to the analysis on the steady solutions of \er{fisherks}. Keeping \er{mt} in mind we say that $m(t) \to M_0$ as $t \to \infty$, and the stationary equation is followed in the sense of distribution
\begin{align}\label{2022}
\left\{
  \begin{array}{ll}
    \Delta U_s(x)-\nabla\cdot (U_s(x)\nabla C_s(x))=0,~~x \in \R^2, \\
    -\Delta C_s(x)=U_s(x),~~x \in \R^2
  \end{array}
\right.
\end{align}
with
\begin{align}\label{M0}
\int_{\R^2} U_s(x) dx=M_0.
\end{align}

Now three equivalent statements for the stationary solutions are shown and one has the constant chemical potential inside the support of the steady solutions.

\begin{proposition}[Three equivalent statements for the steady states]\label{steadyproperties}
Let $\Omega \in \R^2$ be a connected open set.
Assuming that $U_s \log U_s \in L^1(\R^2)$ and $U_s \in L_+^1(\R^2)$ with
$\int_{\R^2} U_s dx=M_0$, $U_s \in C(\bar{\Omega})$ and $U_s>0
~~\mbox{in}~~\Omega,~~U_s=0 ~~\mbox{in}~~\R^2 \setminus \Omega$. Moreover, if $\Omega$ is unbounded, assume that $U_s$ decays at infinity.

Assume also $C_s \in C^2(\R^2)$ is the Newtonian potential
satisfying
\begin{align}
& \Delta U_s-\nabla\cdot (U_s \nabla C_s )=0, ~~\mbox{in}~~\R^2, \label{mus} \\
& \mu_s=\log U_s -C_s, \mbox{ in } \R^2
\label{mupotential}
\end{align}
in the sense of distribution. Then the following three statements are equivalent: \\
(i)  No dissipation: $\int_{\Omega} U_s \big|\nabla \mu_s \big|^2 dx=0$. \\
(ii)  $U_s$ is the minimizer of the total interaction energy $F(u)=\frac{1}{m(t)} \int_{\R^2} u \log u dx-\frac{1}{2m(t)} \int_{\R^2} u c dx-\log m(t)$.\\ 
(iii) The chemical potential satisfies
\begin{align}
\mu_s(x) = \bar{C}, & \qquad \forall x \in \mbox{Supp} (U_{s})
\end{align}
where $\mbox{Supp} (U_{s})=\R^2$ and
\begin{align}\label{Cbar}
\bar{C}=\frac{1}{M_0} \left( \int_{\R^2} U_s \log U_s -U_s c_s dx\right).
\end{align}
\end{proposition}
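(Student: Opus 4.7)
The plan is to establish the three-way equivalence by proving (i) $\Leftrightarrow$ (iii) and (ii) $\Leftrightarrow$ (iii), handling each on its own footing.

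First I would dispatch (i) $\Leftrightarrow$ (iii). Since $U_s$ is continuous and strictly positive on the connected open set $\Omega$, the vanishing of the weighted Dirichlet integral $\int_\Omega U_s|\nabla \mu_s|^2\,dx$ is equivalent to $\nabla \mu_s \equiv 0$ almost everywhere on $\Omega$, which by connectedness forces $\mu_s$ to equal a constant on $\Omega = \mbox{Supp}(U_s)$. Multiplying \eqref{mupotential} by $U_s$ and integrating identifies the value of that constant:
\begin{equation*}
\bar{C}\,M_0 = \int_{\R^2}(U_s \log U_s - U_s C_s)\,dx,
\end{equation*}
which is precisely \eqref{Cbar}. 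The converse direction is immediate.

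Next I would establish (ii) $\Rightarrow$ (iii) by a first-variation argument. At a steady state $m(t)\equiv M_0$, so $F(U_s) = \frac{1}{M_0}\int U_s \log U_s\,dx - \frac{1}{2M_0}\int U_s C_s\,dx - \log M_0$. Taking $\varphi \in C_0^\infty(\R^2)$ with $\int \varphi\,dx = 0$ to preserve the mass constraint and letting $C_\varphi$ denote its Newtonian potential, the symmetry $\int U_s C_\varphi\,dx = \int \varphi C_s\,dx$ (Fubini plus symmetry of the logarithmic kernel) gives
\begin{equation*}
\frac{d}{d\epsilon}\bigg|_{\epsilon=0} F(U_s + \epsilon \varphi) = \frac{1}{M_0}\int_{\R^2} \varphi\,(\log U_s - C_s)\,dx = \frac{1}{M_0}\int_{\R^2} \varphi\,\mu_s\,dx.
\end{equation*}
Demanding this vanish for every mean-zero $\varphi$ forces $\mu_s$ to be constant on $\mbox{Supp}(U_s)$, which is (iii); the constant is identified as $\bar{C}$ exactly as in the (i) $\Rightarrow$ (iii) step.

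For the converse (iii) $\Rightarrow$ (ii), the key input is the critical logarithmic Hardy--Littlewood--Sobolev inequality \eqref{logHLS}. At the critical mass $M_0 = 8\pi$, inequality \eqref{logHLS} delivers a sharp lower bound for $F$ on the manifold $\{\int u\,dx = M_0\}$, and the Euler--Lagrange identity $\mu_s = \bar{C}$ characterizes precisely the extremizers of \eqref{logHLS} (modulo translations and dilations). Hence any $U_s$ fulfilling (iii) realizes equality in \eqref{logHLS} and is a minimizer of $F$. I expect this last implication to be the main obstacle: the interaction part of $F$ is not convex in $u$, so a critical-point identity does not by itself yield minimality; one genuinely needs the sharp logarithmic HLS inequality together with the rigidity of its extremizers to upgrade stationarity to global minimality. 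Putting the three implications together closes the cycle and identifies the common constant as $\bar{C}$ in \eqref{Cbar}.
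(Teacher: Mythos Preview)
Your treatment of (i) $\Leftrightarrow$ (iii) and (ii) $\Rightarrow$ (iii) via the first variation matches the paper's route. However, two genuine gaps separate your proposal from a complete proof.

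\textbf{First, you omit the claim $\mbox{Supp}(U_s)=\R^2$}, which is an explicit part of statement (iii). Nothing in your argument rules out $\Omega$ being a proper subset of $\R^2$. The paper addresses this by using the logarithmic HLS inequality \eqref{logHLS} to produce a \emph{finite lower bound} for $\bar{C}$, namely
\[
\bar{C} \ge \frac{1}{M_0}\left(\int_\Omega U_s\log U_s\,dx - 2\int_{\R^2} U_s\log U_s\,dx - 2M_0(1+\log\pi-\log M_0)\right).
\]
If $\Omega$ were bounded, then at $\partial\Omega$ one would have $U_s=0$, whence $\mu_s=\log U_s - C_s = -\infty$, contradicting $\mu_s\equiv\bar{C}>-\infty$ on $\Omega$. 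This is where \eqref{logHLS} actually enters the paper's proof---not in the direction (iii) $\Rightarrow$ (ii).

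\textbf{Second, your (iii) $\Rightarrow$ (ii) argument is circular.} You invoke the critical value $M_0=8\pi$ and the rigidity of logarithmic HLS extremizers, but $M_0=8\pi$ is \emph{not} an assumption of the proposition; it is derived afterwards (Lemma~\ref{M08pi}) as a consequence. The paper sidesteps this entirely by reading ``minimizer'' as ``critical point of $F$ under the mass constraint'': the first-variation computation you wrote for (ii) $\Rightarrow$ (iii) is then literally reversible, so (ii) $\Leftrightarrow$ (iii) is a single identity rather than two separate implications. Your global-minimality route would also need to restrict the perturbations $\varphi$ to $C_0^\infty(\Omega)$ (with $\varepsilon$ small enough that $U_s+\varepsilon\varphi\ge 0$), since $\log U_s$ is undefined outside $\Omega$.
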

\begin{proof}
\noindent Firstly we prove (i) $\Leftrightarrow$ (iii): (i) is directly from (iii). (i)$ \Rightarrow $ (iii): Suppose $\int_{\Omega} U_s |\nabla \mu_s|^2 dx=0$. It follows from $U_s>0$ at any point $x_0\in \Omega$ that $\nabla \mu_s=0$ in a neighborhood of $x_0$ and thus $\mu_s$ is a constant in this neighborhood. By the connectedness of $\Omega$ we deduce that $\mu_s \equiv \bar{C}$ in $\Omega$.

In order to prove (ii) $\Leftrightarrow$ (iii), we define the minimizer of $F(u)$: for any $\varphi \in C_0^\infty (\Omega),$ let $\overline{\Omega}_0 =\mbox{ supp } \varphi
$ with $\int_\Omega \varphi(x)dx=0,~\Omega_0 \subset\subset \Omega$.
There exists
$$
 \varepsilon_0: = \frac{ \displaystyle \min_{y \in \overline{\Omega}_0} U_s(y)} { \displaystyle \max_{y \in \overline{\Omega}_0} \left| \varphi (y) \right| }
 >0,
$$
such that $U_s+\varepsilon \varphi \geq 0$ in $\Omega$ for
$0<\varepsilon<\varepsilon_0$. Now $U_s$ is a critical point of
$F(u)$ in $\Omega$ if and only if
\begin{align}
 \frac{d}{d \varepsilon} \Big |_{\varepsilon=0} F\left( U_s+ \varepsilon \varphi  \right)
 =0,\quad \forall \varphi \in C_0^\infty(\Omega).
\end{align}
The above definition yields that for any $\varphi \in C_0^\infty (\Omega)$ with $\int_\Omega \varphi(x)dx=0$
\begin{align}\label{support}
\int_{\Omega} \left( \log U_s - C_s \right) \varphi
 dx =0.
\end{align}
For any $\psi \in C_0^\infty(\Omega),$ denoting
\begin{align}
\varphi=\psi-\frac{U_s}{M_0} \int_{\Omega} \psi dx,
\end{align}
\er{support} becomes
\begin{align}
\int_{\Omega} \left(\mu_s-\int_{\Omega} \frac{U_s}{M_0} \mu_s dx\right) \psi dx=0,\quad \mbox{for~any}~~\psi \in C_0^\infty(\Omega).
\end{align}
This implies
\begin{align}\label{innercbar}
    \mu_s=\log U_s - C_s =\bar{C}, ~~ a.e. \mbox{ in }
    \Omega,
\end{align}
where
\begin{align}\label{wentai}
  \bar{C} =  \frac{1}{M_0}\int_{\Omega} U_s \log U_s -U_s C_s dx.
\end{align}
In addition, it follows from \er{logHLS} that
\begin{align}
2\int_{\R^2} U_s \log U_s dx+2M_0 (1+\log \pi-\log M_0) \ge \int_{\R^2} U_s C_s dx \ge \int_{\Omega} U_s C_s dx.
\end{align}
Recalling $U_s \log U_s \in L^1(\R^2)$ we can further deduce that
\begin{align}\label{cbar}
\bar{C} \ge \frac{1}{M_0} \left( \int_{\Omega}U_s\log U_s dx-2\int_{\R^2}U_s\log U_s dx-2 M_0 (1+\log \pi-\log M_0)\right).
\end{align}
Now we claim $\Omega=\R^2.$ If $\Omega$ is bounded, then \er{innercbar} implies $\bar{C}=-\infty$ at the boundary of $\Omega.$
This contradiction with \er{cbar} implies $\Omega$ is unbounded and the connected unbounded open set is $\R^2.$ Hence we complete the proof for (i) $\Leftrightarrow$ (iii) and (ii) $\Leftrightarrow$ (iii).\quad $\Box$
\end{proof}

From Proposition \ref{steadyproperties}, we can obtain that the stationary equation is
\begin{align}\label{uscs}
\left\{
  \begin{array}{ll}
    \log U_s-C_s=\bar{C},\quad \mbox{in}~~\R^2, \\
    -\Delta C_s=U_s, \quad \mbox{in}~~\R^2.
  \end{array}
\right.
\end{align}
Letting $\phi=\log U_s$ in \er{uscs}, the steady equation reduces to
\begin{align}\label{starstar}
-\Delta \phi=e^{\phi},\quad \mbox{in}~~\R^2.
\end{align}
It has been proved in \cite{wc91,wei83} that all the solutions $\phi(x)\in C^2(\R^2)$ of \er{starstar} uniquely assume the radial form up to translation
\begin{align}\label{phix}
\phi(x)=\log \frac{32 \lambda^2}{(4+\lambda^2 |x|^2)^2},~~\lambda>0.
\end{align}
It provides us with the explicit expression of the stationary solutions
\begin{align}
U_s(x)=e^{\phi(x)}=\frac{32 \lambda^2}{(4+\lambda^2 |x|^2)^2},~~\lambda>0,
\end{align}
with $\int_{\R^2} U_s(x)dx=8\pi.$ One readily check that there are nontrivial solutions to \er{2022} only in the case of a special value of the total mass. Combining with \er{M0} it gives our final estimates.
\begin{lemma}\label{M08pi}
There are nontrivial steady states to \er{fisherks} only for
\begin{align}
M_0=8\pi,
\end{align}
given by a one-parameter family with $\lambda>0$
\begin{align}\label{us}
U_s(x)=\frac{32 \lambda^2}{(4+\lambda^2 |x|^2)^2}
\end{align}
with infinite second moment $\int_{\R^2} |x|^2 U_s(x) dx=\infty.$
\end{lemma}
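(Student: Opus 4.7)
The plan is to use Proposition \ref{steadyproperties} as the main engine: any nontrivial stationary pair $(U_s, C_s)$ with the stated regularity and integrability must satisfy the chemical potential identity $\log U_s - C_s = \bar C$ on the support of $U_s$, and that support is necessarily all of $\R^2$. So I would first record this reduction, noting that the constant $\bar C$ is determined by \eqref{Cbar} and that $\Omega = \R^2$ (the argument already carried out in Proposition \ref{steadyproperties} rules out any bounded or proper unbounded support by forcing $\bar C = -\infty$ on $\partial \Omega$).

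Next, I would set $\phi(x) := \log U_s(x)$, so that $C_s = \phi - \bar C$, and use $-\Delta C_s = U_s = e^\phi$ to obtain the Liouville-type equation
\begin{equation}
-\Delta \phi = e^\phi \quad \text{in } \R^2,
\end{equation}
with $\phi \in C^2(\R^2)$ (from the smoothness of $U_s$ on its support) and $e^\phi \in L^1(\R^2)$ (from $U_s \in L^1_+(\R^2)$). At this point I would invoke the classification theorem of Chen-Li / Wei, which states that all solutions of this equation with finite total mass are radially symmetric about some point up to translation and are given by
\begin{equation}
\phi(x) = \log \frac{32\lambda^2}{(4+\lambda^2|x|^2)^2}, \qquad \lambda > 0.
\end{equation}
This yields the one-parameter family $U_s(x) = \frac{32\lambda^2}{(4+\lambda^2|x|^2)^2}$ in the statement. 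The key nontrivial ingredient here is this classification result; I would just cite \cite{wc91, wei83} rather than redo the moving-plane argument.

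Finally, I would verify the two quantitative assertions by direct computation in polar coordinates. For the mass, changing variables to $r = |x|$ and then $s = \lambda r/2$,
\begin{equation}
\int_{\R^2} U_s(x)\,dx = 2\pi \int_0^\infty \frac{32\lambda^2 r}{(4+\lambda^2 r^2)^2}\,dr = 8\pi,
\end{equation}
which combined with the constraint $\int_{\R^2} U_s\,dx = M_0$ from \eqref{M0} forces $M_0 = 8\pi$ as the only admissible value. For the second moment, the integrand behaves like $|x|^2 \cdot |x|^{-4} = |x|^{-2}$ at infinity, so
\begin{equation}
\int_{\R^2} |x|^2 U_s(x)\,dx = 2\pi \int_0^\infty \frac{32\lambda^2 r^3}{(4+\lambda^2 r^2)^2}\,dr = \infty,
\end{equation}
because the integrand decays only like $1/r$ for large $r$. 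No step here is a real obstacle once Proposition \ref{steadyproperties} and the Liouville classification are in hand; the only subtle point already handled upstream is ruling out a proper subdomain $\Omega \subsetneq \R^2$ as the support of $U_s$.
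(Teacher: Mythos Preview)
Your proposal is correct and follows essentially the same route as the paper: reduce via Proposition \ref{steadyproperties} to the identity $\log U_s - C_s = \bar C$ on all of $\R^2$, set $\phi = \log U_s$ to obtain the Liouville equation $-\Delta\phi = e^\phi$, cite the Chen--Li classification \cite{wc91,wei83} for the explicit form \eqref{phix}, and then read off $\int_{\R^2} U_s\,dx = 8\pi$ (forcing $M_0=8\pi$ by \eqref{M0}) and the divergence of the second moment. Your write-up supplies slightly more detail on the mass and second-moment integrals than the paper does, but the argument is the same.
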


\subsection{Infinite time blow-up}
For $m_0<M_0=8\pi,$ combining \er{tritri} and Theorem \ref{globalexist} we deduce that solutions to system \er{rhorho} exist globally for any $0<t<\infty$ and satisfies
\begin{align}
\int_{\R^2} \rho(\cdot,t)|\log\rho(\cdot,t)|dx+\int_{\R^2}\rho(\cdot,t)|x|^2dx < \infty.
\end{align}
 In this subsection, we will further show that any solution of \er{fisherks} will converge towards a delta dirac at the center of mass via system \er{rhorho}. The main tool are the free energy functional
\begin{align}\label{estimates}
     F[\rho](t)+\int_0^t\left(\int_{\R^2} \rho|\nabla\left(\log \rho-m(s) w\right) |^2dx+\frac{m'(s)}{2}\int_{\R^2} \rho w dx\right)ds \le F[\rho_0].
\end{align}
with $F[\rho]:= \int_{\R^2}\rho \log \rho dx-\frac{m(t)}{2} \int_{\R^2}\rho w dx$. To prove this result, applying the mass conservation of $\rho$ we follow a procedure analogous to Lemma 3.6 and Corollary 3.2 in \cite{BCM08} to obtain
\begin{lemma}
Assume $\int_{\R^2} \left(|x|^2 +\left|\log \rho_0(x)\right|\right)\rho_0(x) dx<\infty$. If $\rho_0 \in L_+^1 \cap L^\infty(\R^2)$ and $m_0<M_0= 8\pi$, given  any solution $\rho$ to \er{rhorho}, we have
\begin{align}
    \lim_{t \to \infty}\int_{\R^2}\rho(x,t) \log \rho(x,t)dx = + \infty
\end{align}
and
\begin{align*}
    \lim_{t\to \infty} \rho(x,t) = \delta_{M_1}~~~weakly-^* ~as ~measures.
\end{align*}
where $M_1$ is the center of mass in infinite time.
\end{lemma}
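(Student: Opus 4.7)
The plan is to mimic the approach of \cite{BCM08} for the critical-mass Keller--Segel equation, adapted to the rescaled system \er{rhorho}, which by the transformation \er{tritri} satisfies $\int_{\R^2}\rho(x,t)\,dx=1$ for all $t\geq 0$. Since $m(t)<8\pi$ throughout when $m_0<M_0=8\pi$, the logarithmic HLS inequality \er{logHLS} (with $f=\rho$, $M=1$) reads $\int_{\R^2}\rho\log\rho\,dx-4\pi\int_{\R^2}\rho w\,dx\geq -1-\log\pi$. Splitting the free energy as
\begin{align*}
F[\rho]=\Bigl(1-\tfrac{m(t)}{8\pi}\Bigr)\int_{\R^2}\rho\log\rho\,dx+\tfrac{m(t)}{8\pi}\Bigl(\int_{\R^2}\rho\log\rho\,dx-4\pi\int_{\R^2}\rho w\,dx\Bigr),
\end{align*}
and using the Gaussian entropy bound $\int\rho\log\rho\geq -\log\bigl(\pi e\int|x|^2\rho\,dx\bigr)$ combined with the uniform second moment bound obtained from \er{m2t} (note that $\int|x|^2\rho\,dx=\int|x|^2 u\,dx/m(t)$ stays bounded as $t\to\infty$ since the bracket in \er{m2t} remains bounded at $M_0=8\pi$), one obtains a uniform lower bound $F[\rho](t)\geq -C$.

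Combining this lower bound with the dissipation identity \er{estimates} and the fact that $m'(s)=m(s)(M_0-m(s))\geq 0$, together with a splitting of $\int\rho w\,dx$ into controlled positive and negative parts, I conclude $\int_0^\infty\int_{\R^2}\rho|\nabla(\log\rho-m(s)w)|^2\,dx\,ds<\infty$. Hence there exists a sequence $t_k\to\infty$ with $m(t_k)\to 8\pi$ along which the Fisher-type dissipation vanishes. The uniform second moment bound yields tightness of $\{\rho(\cdot,t_k)\}$, so a further subsequence converges weakly-$^*$ as measures to a probability measure $\mu_\infty$ whose first moment coincides with that of $\rho_0$; conservation of first moments under \er{rhorho} follows from a direct computation using the antisymmetry of the log-kernel's gradient.

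To identify $\mu_\infty$ I argue by contradiction: if $\mu_\infty$ had a nontrivial absolutely continuous part, the vanishing of the dissipation together with $m(t_k)\to 8\pi$ would force that part, rescaled to total mass $8\pi$, to solve the stationary system \er{mus}--\er{mupotential} in the sense of Proposition \ref{steadyproperties}. By Lemma \ref{M08pi}, every such solution is a Liouville profile $U_s^\lambda(x)=\tfrac{32\lambda^2}{(4+\lambda^2|x|^2)^2}$ with $\int|x|^2 U_s^\lambda\,dx=\infty$, contradicting the uniform second moment control along $t_k$. Therefore $\mu_\infty$ is purely atomic; combined with unit total mass and conservation of the first moment this yields $\mu_\infty=\delta_{M_1}$ with $M_1=\int_{\R^2}x\,\rho_0\,dx$. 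Lower semicontinuity of the entropy along weak-$^*$ convergence to a Dirac mass then gives $\int\rho(x,t)\log\rho(x,t)\,dx\to+\infty$. The main obstacle I foresee is the concentration-compactness step rigorously identifying the diffuse part of $\mu_\infty$ as a stationary solution: the approximate Euler--Lagrange identity coming from the vanishing Fisher-type dissipation must be translated into the distributional stationary equation, which requires ruling out the vanishing and dichotomy alternatives of Lions' lemma. This is precisely where following \cite{BCM08} (specifically their Lemma 3.6 and Corollary 3.2) supplies the technical template.
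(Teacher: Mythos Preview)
Your proposal uses the same essential ingredients as the paper---the HLS lower bound on $F[\rho]$, integrability of the dissipation from \er{estimates}, the uniform second-moment control coming from \er{m2t} at $M_0=8\pi$, and the crucial contradiction with Lemma~\ref{M08pi} (infinite second moment of steady states)---but you reverse the logical order of the two conclusions. The paper first proves the entropy blow-up by contradiction: assuming $\int\rho\log\rho$ stays bounded along some sequence $t_k\to\infty$, it invokes Dunford--Pettis to extract a weak $L^1$ limit $\rho_\infty\in L^1(\R^2)$ with $0<\int|x|^2\rho_\infty<\infty$; the integrable dissipation then forces (via time-translated limits as in \cite{BCM08}) $\rho_\infty$ to satisfy the stationary equation \er{pass to limit}, whence its second moment is infinite, a contradiction. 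Only \emph{after} establishing entropy blow-up does the paper cite Lemma~3.1 and Corollary~3.2 of \cite{BCM08} to conclude the Dirac convergence. The advantage of this ordering is that the contradiction hypothesis (bounded entropy along $t_k$) is precisely what licenses Dunford--Pettis and delivers an honest $L^1$ limit, so the identification with a stationary profile is direct and there is no need to decompose a limiting measure into absolutely continuous and singular parts.

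Your route, by contrast, goes through weak-$^*$ limits of measures without any a priori entropy bound, which is why you are forced into the concentration--compactness discussion you flag as the main obstacle. Beyond that acknowledged difficulty, there is a genuine gap at the step ``purely atomic $+$ unit mass $+$ first moment conserved $\Rightarrow$ $\mu_\infty=\delta_{M_1}$'': these constraints do not by themselves rule out, say, two atoms of mass $\tfrac12$ placed symmetrically about $M_1$. Excluding multiple atoms requires additional dynamical input (this is part of what \cite{BCM08} supplies). Finally, your argument as written establishes the Dirac limit and entropy blow-up only along a subsequence; upgrading to the full limit needs the standard subsubsequence argument once uniqueness of the limit is known, which you should state explicitly.
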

\begin{proof}
Without loss of generality, let $\int_{R^2}x\rho(x,t)dx = 0$. Now we assume by  contradiction that there exists an increasing sequence of time $\{t_k\} \to \infty$ such that
\begin{align*}
    \int_{\R^2}\rho(x,t_k)\log \rho(x,t_k)dx < \infty.
\end{align*}
Combining with the boundedness of the second moment of $\rho$, we can find a subsequence of $\rho(x,t_k)$ (without relabelling for simplicity) that converges weakly to $\rho_\infty(x)$ in $L^1(\R^2)$ by Dunford-Pettis theorem. In addition, by $\rho=\frac{u(x,t)}{m(t)}$ and \er{m2t} one has that the second moment of the limiting density $\rho_\infty(x)$ satisfies
\begin{align}\label{rhoinfinte}
    0<\int_{\R^2} |x|^2 \rho_{\infty}(x)dx<\infty.
\end{align}

On the other hand, it follows from the definition \er{mt} of $m(t)$ and the logarithmic H-L-S inequality \er{logHLS} that $F(\rho)$ is bounded from below:
\begin{align*}
\begin{array}{ll}
      F(\rho)&=\int_{\R^2} \rho(x,t)\log \rho(x,t)dx-\frac{m(t)}{2}\int_{\R^2} \rho(x,t)w(x,t)dx  \\ \\
     & \ge \int_{\R^2} \rho(x,t)\log \rho(x,t)dx-4\pi\int_{\R^2} \rho(x,t)w(x,t)dx\\\\
     &\ge \int_{\R^2} \rho(x,t)\log \rho(x,t)dx + 2 \iint_{\R^2\times \R^2}\rho(x)\rho(y) \log|x-y| dx dy \\\\
     &\ge C,
\end{array}
\end{align*}
where the last line holds due to $\int_{\R^2} \rho(\cdot,t) dx=1$ for any $0<t<\infty$. So from \er{estimates} we have
\begin{align*}
    0 \le \lim_{t\to \infty}\int_0^{t}\left(\int_{\R^2}\rho|\nabla \log\rho-m(s)\nabla w|^2dx+\frac{m'(s)}{2}\int_{\R^2} \rho w dx\right)ds \le F[\rho_0]-\liminf_{t\to \infty}F[\rho](t).
\end{align*}
As a consequence,
\begin{align*}
\lim_{t\to \infty}\int_t^{\infty}\left(\int_{\R^2}\rho|\nabla \log\rho-m(s)\nabla w|^2dx+\frac{m'(s)}{2}\int_{\R^2}\rho w dx\right)ds=0.
\end{align*}
Noting that $\displaystyle \lim_{t\to\infty} m(t)=8\pi, \displaystyle \lim_{t\to\infty} m'(t)=0$,  we have, up to the extraction of subsequences , that the limit $\varrho_{\infty}(s,x)$ of $(s,x) \mapsto \varrho_{\infty}(t+s,x)$ when t goes to infinity satisfies
\begin{align}\label{pass to limit}
    \nabla\log \varrho_{\infty}-8\pi \nabla w_{\infty}=0,~
    w_{\infty}=-\frac{1}{2\pi} \log |\cdot|*\varrho_{\infty}.
\end{align}
Now reviewing Section \ref{31} we find that $\varrho_{\infty}$ are the family of stationary solutions to \er{rhorho} with infinite second momentum by virtue of the transformation \er{tritri} and Lemma \ref{M08pi}. This fact contradicts with \er{rhoinfinte} and thus the first result holds true.

Moreover, by making use of the mass conservation of $\rho$, a careful reading of the proof of Lemma 3.1 and Corollary 3.2 in \cite{BCM08} gives the latter desired result. \quad $\Box$
\end{proof}

Applying \er{tritri} we are able to show our main result of system \er{fisherks} in this section.
\begin{theorem}[Infinite Time Blow-up]
Assume $\int_{\R^2} \left( \left|\log u_0(x)\right|+|x|^2 \right)u_0(x) dx<\infty$ and $u_0 \in L_+^1 \cap L^\infty(\R^2)$. If $\int_{\R^2} u_0(x) dx <M_0 = 8\pi$,
the the weak solution $u$ to \er{fisherks} exists globally in time and converges to a delta dirac of mass $M_0$ concentrated at the center of mass as $t \to \infty$.
\end{theorem}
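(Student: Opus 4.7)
The plan is to leverage the transformation $\rho(x,t)=u(x,t)/m(t)$ of \er{tritri}, which converts \er{fisherks} into the mass-conserving aggregation--diffusion system \er{rhorho}, and then to combine two facts that are already in hand: the global existence of $u$ from Theorem \ref{globalexist}, and the weak-$*$ concentration $\rho(\cdot,t)\rightharpoonup \delta_{M_1}$ proved in the preceding lemma. Global existence under the hypothesis $m_0<M_0=8\pi$ together with $u_0\in L^1_+\cap L^\infty(\R^2)$ and the bounded initial entropy/second moment follows directly from Theorem \ref{globalexist}. Since $\rho$ has unit mass and satisfies \er{rhorho}, the preceding lemma gives $\rho(\cdot,t)\rightharpoonup \delta_{M_1}$ in the weak-$*$ topology of Radon measures, where $M_1$ is the center of mass of $\rho$.

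Next I would pass this convergence back to the original variable. By \er{mt} the scalar function $m(t)$ satisfies $m(t)\to M_0=8\pi$ as $t\to\infty$. For any $\varphi\in C_b(\R^2)$,
\begin{align*}
\int_{\R^2}\varphi(x)\,u(x,t)\,dx \;=\; m(t)\int_{\R^2}\varphi(x)\,\rho(x,t)\,dx \;\longrightarrow\; M_0\,\varphi(M_1),\qquad t\to\infty,
\end{align*}
which is precisely $u(\cdot,t)\rightharpoonup M_0\,\delta_{M_1}$ weakly-$*$ as measures. Thus the $L^1$ mass is asymptotically concentrated at a single point with total mass $8\pi$, proving the infinite-time blow-up and completing the theorem.

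The main obstacle I anticipate is ensuring that the center of mass $M_1$ is a well-defined, finite point. To identify it, I would test \er{rhorho} against the cutoff $x_i\psi_R(x)$ of \er{cutoff} and check that the convective contribution symmetrizes to zero because $\nabla x_i-\nabla y_i=0$, while the dissipative term is controlled by the second-moment bound transferred from $u$ to $\rho$. Passing $R\to\infty$ via Lebesgue's monotone convergence shows that $\int_{\R^2}x\,\rho(x,t)\,dx$ is independent of $t$, so
\begin{align*}
M_1 \;=\; \int_{\R^2} x\,\rho(x,0)\,dx \;=\; \frac{1}{m_0}\int_{\R^2} x\,u_0(x)\,dx,
\end{align*}
which is finite by the assumption $\int |x|^2 u_0\,dx<\infty$. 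With $M_1$ thus identified, the weak-$*$ limit above holds unambiguously, and combining it with the global existence yields the conclusion $u(\cdot,t)\rightharpoonup 8\pi\,\delta_{M_1}$ as $t\to\infty$.
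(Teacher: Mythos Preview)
Your proposal is correct and follows exactly the route the paper takes: the paper simply states that, applying the transformation \er{tritri}, the preceding lemma on $\rho$ together with $m(t)\to M_0$ yields the theorem, and your argument spells this out in detail (including the identification of the center of mass, which the paper leaves implicit).
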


\section{Radially symmetric solutions}\label{sec5}
\def\theequation{4.\arabic{equation}}\makeatother
\setcounter{equation}{0}
\def\thetheorem{4.\arabic{theorem}}\makeatother
\setcounter{theorem}{0}

In this section, we firstly study the radially symmetric solutions of \er{rhorho}. The single equation we derive from \er{rhorho} with radial symmetry satisfies a comparison principle. We can reach the steady state of the reduced system and have a new intuition of the relationship among $M_0,m_0$ and $8\pi$ besides the second moment. Subsequently, we divide it into $M_0<8\pi$ and $ M_0>8\pi$. In these cases, the comparison principle plays an important role to capture the full structures of the reduced system. Finally, we transfer the system \er{rhorho} back to the original system \er{fisherks} and summarize the dynamical behaviors of radial symmetric solutions in different cases.

\subsection{Reduced system and steady states}

Radially symmetric solutions of the system \er{rhorho} on $\rho(t,r),w(t,r)$ is equivalent to
\begin{align}\label{rhorhoradial}
\left\{
      \begin{array}{ll}
      \frac{\partial}{\partial t} (r \rho)=(r \rho')'-m(t) (r\rho w')',~~& t \ge 0,~r > 0, \\
      -(rw')'=r \rho, ~~& t \ge 0,~r > 0, \\
      \rho'(t,r=0)=0, ~~& t \ge 0, \\
     \rho(t=0,r) =\rho_0(r) \ge 0, ~~& r \ge 0.
      \end{array}\right.
\end{align}
This can be reduced to a single equation on $M(t,r)$ which is defined as
\begin{align}
M(t,r)=2 \pi \int_0^r \sigma \rho(t,\sigma) d\sigma=-2\pi r w'(t,r).
\end{align}
Integrating \er{rhorhoradial} we arrive at
\begin{align}\label{Mradial}
\left\{
  \begin{array}{ll}
    \frac{\partial}{\partial t} M(t,r)=r \left( \frac{M'}{r} \right)'+\frac{m(t)}{2\pi r}M'M,~~& t \ge 0,~r>0, \\
     M(t,0)=0,~M(t,\infty)=1,~~& t \ge 0, \\
     M(0,r)=2\pi \int_0^r \sigma \rho_0(\sigma) d\sigma,~~& r \ge 0.
  \end{array}
\right.
\end{align}
Next we write for time independent solutions
\begin{align}
\left\{
  \begin{array}{ll}
   r^2 \left( \frac{\bar{M}'}{r} \right)'+\frac{M_0}{2\pi }\bar{M}'\bar{M}=r\bar{M}''-\bar{M}'+\frac{M_0}{4\pi }\left(\bar{M}^2\right)', ~~r>0,\\[1mm]
   \bar{M}(0)=0,~~\bar{M}'(r)>0.
  \end{array}
\right.
\end{align}
A simple manipulation leads to
\begin{align}\label{Msr}
\bar{M}_\lambda(r)=\frac{8\pi}{M_0\left(1+\lambda r^{-2}\right)}\quad \mbox{for some}~~\lambda>0.
\end{align}
Coming back to the density we obtain
\begin{align}\label{rhosr}
\bar{\rho}_\lambda(r)=\frac{8 \lambda}{M_0} \frac{1}{(r^2+\lambda)^2}.
\end{align}
It is worthwhile to note that \er{rhosr} is consistent with \er{us} up to a parameter. Recalling $\bar{M}(\infty)=1$, we need to emphasize that
there are radial steady states to \er{Mradial} only for $8\pi=M_0$, given by $\bar{M}_\lambda(r)=\frac{1}{1+\lambda r^{-2}}$.

\subsection{Refined existence and blow-up}

In terms of \er{rhorhoradial} and \er{Msr}, we can use a comparison argument to construct sub-or super-solutions in the spirit of \cite{bp07}. The analysis for the case $M_0>8\pi$ and $M_0<8\pi$ are quite different in view of the expression of \er{Msr}. Hence, in the following we shall separate these two cases to discuss.

\subsubsection{Global existence for $M_0<8\pi$}\label{radialglobal}
\begin{lemma}[Global solutions]\label{rhoglobal}
Let $m_0<M_0$ and $\bar{M}_{\lambda_0}(r)$ is defined as \er{Msr} for some $\lambda_0>0$. We assume that
\begin{align}
1<\frac{8\pi}{M_0}, \quad M(0,r)<\bar{M}_{\lambda_0}(r)
\end{align}
for any $r>0$, then the solution of \er{rhorhoradial} vanishes in $L^1(\R^2)$ locally. That's
\begin{align}
M(t,r) \to 0~~as~~t \to \infty
\end{align}
uniformly in interval $0\le r<R$ for any $R>0.$
\end{lemma}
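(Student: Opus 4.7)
My approach is to apply a parabolic comparison principle to the quasi-linear scalar equation \eqref{Mradial} for $M(t,r)$, using the stationary one-parameter family $\{\bar M_\lambda\}$ from \eqref{Msr} as time-independent barriers, and then to upgrade the resulting time-uniform bound to local pointwise vanishing via a contradiction argument driven by the free-energy dissipation \eqref{estimates}.

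First I would check that $\bar M_{\lambda_0}$ is a strict super-solution of \eqref{Mradial}. Because $\bar M_{\lambda_0}$ is a steady state of the same equation with $m(t)$ replaced by its limit $M_0$, subtracting the two expressions collapses to the clean residual
\[
\partial_t \bar M_{\lambda_0} - r\Bigl(\tfrac{\bar M_{\lambda_0}'}{r}\Bigr)' - \tfrac{m(t)}{2\pi r}\bar M_{\lambda_0}' \bar M_{\lambda_0} = \tfrac{M_0-m(t)}{2\pi r}\bar M_{\lambda_0}' \bar M_{\lambda_0} > 0,
\]
for all $t\ge 0,\ r>0$, the strict sign being guaranteed by $m_0<M_0$, which via \eqref{mt} yields $m(t)<M_0$ for every $t$. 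With the matching boundary data $M(t,0)=0=\bar M_{\lambda_0}(0)$, $M(t,\infty)=1<8\pi/M_0=\bar M_{\lambda_0}(\infty)$, and the initial inequality $M(0,r)<\bar M_{\lambda_0}(r)$, the parabolic comparison principle applied to \eqref{Mradial} delivers $M(t,r)\le \bar M_{\lambda_0}(r)$ for all $t\ge 0,\ r>0$.

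To upgrade this to $M(t,r)\to 0$ locally in $r$, I would argue by contradiction: suppose $M(t_n,R)\ge \delta>0$ along some sequence $t_n\to\infty$ and some $R>0$. The free-energy inequality \eqref{estimates} combined with the logarithmic HLS bound (applicable because $m(t)\le M_0<8\pi$) produces a uniform-in-$t$ bound on $\int \rho(t)\log\rho(t)\,dx$, hence equi-integrability of $\{\rho(\cdot,t_n)\}$ on bounded sets. After extracting a subsequence, $\rho(\cdot,t_n)\rightharpoonup \rho_\infty$ weakly in $L^1_{\mathrm{loc}}(\R^2)$, and the integrability of the dissipation $\int_0^\infty \int_{\R^2}\rho\,|\nabla(\log\rho-m(s)w)|^2\,dx\,ds$ combined with a standard time-shift and Aubin-Simon compactness forces $\rho_\infty$ to be a radial stationary solution of $\Delta \rho_\infty = M_0\nabla\cdot(\rho_\infty \nabla w_\infty)$. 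By Lemma \ref{M08pi}, every nontrivial such profile has total mass exactly $8\pi/M_0>1$, which is incompatible with $\int \rho_\infty\le 1$; hence $\rho_\infty\equiv 0$, and consequently $M(t_n,R)=\int_{|x|\le R}\rho(\cdot,t_n)\,dx\to 0$, contradicting $M(t_n,R)\ge\delta$.

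The main obstacle is this limit argument. Because the second moment of $\rho(\cdot,t)$ grows with $t$ (mass escapes to infinity by \eqref{m2t}), one cannot extract a global $L^1(\R^2)$ limit and must work with $L^1_{\mathrm{loc}}$ compactness only, while at the same time passing to the limit in the nonlocal term $\nabla w$ whose kernel is only logarithmic. The delicate point is transferring the equi-integrability from $\rho(\cdot,t_n)$ to $\rho(\cdot,t_n+\tau)$ uniformly for $\tau$ in a bounded interval, so that the free-energy dissipation genuinely detects the relaxation of the solution to a stationary profile of the limit equation and the steady-state classification in Section \ref{sec3} can be invoked.
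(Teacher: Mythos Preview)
Your first step is sound: the fixed profile $\bar M_{\lambda_0}$ is indeed a strict supersolution of \eqref{Mradial} because $m(t)<M_0$, and comparison gives the time-uniform bound $M(t,r)\le \bar M_{\lambda_0}(r)$. But this bound is static; the whole difficulty is the local vanishing, and your proposed upgrade via free-energy dissipation plus steady-state classification carries a real gap that you flag but do not close. With only $L^1_{\mathrm{loc}}$ compactness and mass leaking to infinity, the potential $w$ in the dissipation term feels far-away mass that is invisible to any local limit $w_\infty$, so the limit $\rho_\infty$ is not guaranteed to satisfy the \emph{global} equation $-\Delta\phi=e^\phi$ with finite total mass---and it is precisely that global structure which the Chen--Li classification requires. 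Without a uniform second-moment bound (which fails here by \eqref{m2t}) or a genuine concentration-compactness argument, the identification of $\rho_\infty$ as either zero or a member of the family in Lemma~\ref{M08pi} is not justified.

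The paper sidesteps this completely with an elementary and self-contained barrier argument. Rather than a fixed $\lambda_0$, it chooses $0<\mu<1$ with $\mu\,\tfrac{8\pi}{M_0}>1$ and $M(0,r)<\mu\,\bar M_{\lambda_0}(r)$, and constructs the \emph{time-dependent} supersolution
\[
\bar N(t,r)=\min\!\Bigl(1,\ \mu\,\tfrac{8\pi}{M_0\bigl(1+\lambda(t)r^{-2}\bigr)}\Bigr),\qquad \lambda(t)=\lambda_0+\tfrac{M_0}{\mu\pi}(1-\mu)\bigl(\mu\tfrac{8\pi}{M_0}-1\bigr)t.
\]
A direct computation of the operator $L$ on the region where the $\min$ selects the second branch shows $L(\bar N)\ge 0$, while on the complementary region $\bar N\equiv 1$ trivially gives $L(\bar N)=0$. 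Since $\lambda(t)\to\infty$ linearly, the barrier itself tends to zero uniformly on every bounded interval $0\le r<R$, and comparison delivers $M(t,r)\to 0$ directly. The idea you are missing is that the one-parameter family $\{\bar M_\lambda\}$ can be promoted to a \emph{moving} barrier by letting $\lambda$ depend on $t$; no energy, compactness, or classification is needed.
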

\begin{proof}
Due to $M_0<8\pi$ and $M(0,r)<\bar{M}_{\lambda_0}(r)$, we may choose $0<\mu<1$ such that
\begin{align}\label{M0r}
\mu \frac{8\pi}{M_0}>1~~~\mbox{and}~~~ M(0,r)<\mu \bar{M}_{\lambda_0}(r),~\forall r>0.
\end{align}
In view of \er{mt} and $m_0<M_0$, we also have
\begin{align}\label{mtxiaoM0}
m_0<m(t)<M_0
\end{align}
for any $0< t<\infty$. Then we look for a supersolution to \er{rhorhoradial} as follows:
\begin{align}\label{superN}
\bar{N}(t,r)=\min\left( 1,\mu \frac{8\pi}{M_0\left(1+\lambda(t) r^{-2}\right)} \right)
\end{align}
where
\begin{align}\label{lamt}
\lambda(t)=\lambda_0+\frac{M_0}{\mu \pi}\left(1-\mu \right)\left( \mu \frac{8\pi}{M_0}-1 \right)t.
\end{align}
Furthermore, we may define $R(t)$ as
\begin{align}\label{Rt1}
1=\mu \frac{8\pi}{M_0\left(1+\lambda(t) R(t)^{-2}\right)}
\end{align}
which gives rise to
\begin{align}
\left\{
  \begin{array}{ll}
    r \ge R(t), & \bar{N}=1, \\
    r \le R(t), & \bar{N}=\mu \frac{8\pi}{M_0\left(1+\lambda(t) r^{-2}\right)}.
  \end{array}
\right.
\end{align}

Now we claim that $\bar{N}(t,r)$ is a supersolution to \er{rhorhoradial}. Denote the operator $L$
\begin{align}\label{L}
L(\bar{N}):=\frac{\partial}{\partial t} \bar{N}-r \left( \frac{\bar{N}'}{r} \right)'-\frac{m(t)}{2\pi r} \bar{N}'\bar{N}.
\end{align}
For $r \le R(t)$, with the fact \er{mtxiaoM0} we derive that
\begin{align}
L(\bar{N})=L\left(\mu \frac{8\pi}{M_0\left(1+\lambda(t) r^{-2}\right)}\right)&=\bar{N}\frac{r^{-2}}{1+\lambda(t)r^{-2}} \left( -\lambda'(t)+\frac{8\lambda(t) r^{-2}}{1+\lambda(t)r^{-2}} \left(1-\mu \frac{m(t)}{M_0}\right) \right) \nonumber \\
& \ge \bar{N}\frac{r^{-2}}{1+\lambda(t)r^{-2}} \left( -\lambda'(t)+\frac{8\lambda(t) R(t)^{-2}}{1+\lambda(t)R(t)^{-2}} \left(1-\mu \frac{m(t)}{M_0}\right) \right) \nonumber \\
&= \bar{N}\frac{r^{-2}}{1+\lambda(t)r^{-2}} \left( -\lambda'(t)+\frac{M_0}{\mu \pi}\left( 1-\mu \frac{m(t)}{M_0} \right) \left( \mu \frac{8\pi}{M_0}-1 \right) \right) \nonumber \\
&\ge \bar{N}\frac{r^{-2}}{1+\lambda(t)r^{-2}} \left( -\lambda'(t)+\frac{M_0}{\mu \pi}\left( 1-\mu \right) \left( \mu \frac{8\pi}{M_0}-1 \right) \right) \nonumber \\
&=0.
\end{align}
Here the last line holds by the choice \er{lamt} of $\lambda(t)$. We also find that for $r>R(t)$, $L(\bar{N})=L(1)=0$. Hence recalling $M(0,r)\le M(0,\infty)=1$ and \er{M0r} respectively, the comparison principle implies that
\begin{align}
M(t,r) \le \min\left( 1,\mu \frac{8\pi}{M_0\left(1+\lambda(t) r^{-2}\right)} \right)=\bar{N}(t,r)
\end{align}
for all $0 \le t<\infty$ and $0\le r<\infty.$ Moreover, by \er{lamt} we find
\begin{align}
\lambda(t) \to \infty,
\end{align}
then $R(t) \to \infty$ for $t$ large enough. Therefore, for any given interval $r \in (0,R)$ we have
\begin{align}
M(t,r) \le \mu \frac{8\pi}{M_0\left(1+\lambda(t) R^{-2}\right)} \to 0~~\mbox{as}~~t \to \infty.
\end{align}
Thus completes the proof. \quad $\Box$
\end{proof}

\subsubsection{Infinite time blow-up for $M_0>8\pi$}\label{radialblowup}
Next we prove a refined blow-up result (with a weaker condition than the second moment condition) which is close enough to exhibit the shape of the chemotactic collapse solution.

\begin{lemma}[Blow-up]\label{rhoblowup}
Assume $M_0<m_0$ and $\bar{M}_{\lambda_0}(r)$ is defined as \er{Msr} for some $\lambda_0>0$. If
\begin{align}
\frac{8\pi}{M_0}<1, \quad M(0,r)>\bar{M}_{\lambda_0}(r)
\end{align}
for any $r>0$ and there is no finite time blow-up, then the solution to \er{rhorhoradial} blows up in infinite time. Furthermore, the solution has a mass concentration at the origin, that's
\begin{align}
M(t,r(t)) >\frac{8\pi}{M_0}
\end{align}
for $r(t) \to 0$ as $t \to \infty$.
\end{lemma}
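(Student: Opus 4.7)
The argument parallels Lemma~\ref{rhoglobal} but uses a sub-solution that drives mass toward the origin rather than a super-solution that disperses it. Two sign changes distinguish this case from the global-existence one: $M_0>8\pi$ (so $M_0/(8\pi)>1$) and $m_0>M_0$ (so $m(t)$ decreases from $m_0$ to $M_0$, staying strictly above $M_0$). Using the strict initial inequality $M(0,r)>\bar{M}_{\lambda_0}(r)$---possibly after replacing $\lambda_0$ with a slightly larger value---I would fix a parameter $\mu$ with $M_0/(8\pi)<\mu$ close enough to $M_0/(8\pi)$ so that the stronger inequality $M(0,r)\ge \mu\bar{M}_{\lambda_0}(r)$ still holds for every $r>0$, and introduce the candidate sub-solution
\[
\underline{N}(t,r)=\min\!\left(1,\ \mu\bar{M}_{\lambda(t)}(r)\right)=\min\!\left(1,\ \frac{\mu\cdot 8\pi}{M_0(1+\lambda(t)r^{-2})}\right),
\]
with transition radius $R(t)$ determined by $\mu\bar{M}_{\lambda(t)}(R(t))=1$, i.e.\ $R(t)^2=M_0\lambda(t)/(\mu\cdot 8\pi-M_0)$. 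Here $\lambda(t)$ is to be chosen strictly decreasing in $t$, so that the plateau $\{r\ge R(t)\}$, on which $\underline{N}\equiv 1$, expands inward.

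Applying the operator $L$ from Lemma~\ref{rhoglobal}, the same algebra gives on the inner region $r<R(t)$
\[
L(\underline{N})=\underline{N}\,\frac{r^{-2}}{1+\lambda r^{-2}}\left(-\lambda'(t)+\frac{8\lambda(t)r^{-2}}{1+\lambda(t)r^{-2}}\Big(1-\frac{\mu\,m(t)}{M_0}\Big)\right),
\]
in which now $\mu\,m(t)/M_0>1$, so the parenthesised factor is negative. Because $r\mapsto 8\lambda/(r^2+\lambda)$ is decreasing, the supremum of the whole inner expression over $r\in(0,R(t)]$ is attained at $r=R(t)$ and equals $8\bigl(1-M_0/(\mu\cdot 8\pi)\bigr)\bigl(1-\mu m(t)/M_0\bigr)$. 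The sub-solution inequality $L(\underline{N})\le 0$ therefore reduces to
\[
\lambda'(t)\ge 8\Big(1-\tfrac{M_0}{\mu\cdot 8\pi}\Big)\Big(1-\tfrac{\mu\,m(t)}{M_0}\Big)=:-C(t),\qquad C(t)>0.
\]
Since the lower bound is negative, there is genuine freedom to take $\lambda'(t)<0$. Saturating the bound would force $\lambda(T^*)=0$ at a finite $T^*$ and hence a finite-time blow-up, which is excluded by hypothesis; one would therefore pick $\lambda(t)$ to decrease more gently, for instance $\lambda(t)=\lambda_0 e^{-\delta t}$ with $\delta>0$ chosen small enough that $\lambda'(t)\ge -C(t)$ holds for all $t\ge 0$. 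On the outer region $r\ge R(t)$, $\underline{N}\equiv 1$ and $L(1)=0$ trivially.

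Under the no-finite-time-blow-up assumption, $M(t,r)$ is a classical solution for all $t>0$, and combining the initial ordering with $\underline{N}(t,0)=0=M(t,0)$ and $\underline{N}(t,\infty)\le 1=M(t,\infty)$, the comparison principle yields $M(t,r)\ge\underline{N}(t,r)$. Evaluating at $r(t):=\sqrt{\lambda(t)/(\mu-1)}$, which is the radius at which $\mu\bar{M}_{\lambda(t)}(r(t))=8\pi/M_0$, one obtains $M(t,r(t))\ge 8\pi/M_0$ with $r(t)\to 0$ as $t\to\infty$; the strict inequality in the statement follows from the strict choice $\mu>M_0/(8\pi)$. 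The main obstacle is that $\underline{N}$ has a kink at $r=R(t)$ where $\underline{N}'$ drops discontinuously from a positive value inside to $0$ outside: the minimum of two classical sub-solutions is not automatically a sub-solution, so the comparison step must be justified either by regularising $\underline{N}$ from below by a smooth approximation $\underline{N}_\varepsilon$, verifying the sub-solution inequality for each $\varepsilon>0$ and passing to the limit, or by appealing to a distributional/viscosity form of the parabolic comparison principle and confirming that the one-sided jump in $\underline{N}'$ at $R(t)$ has the sign favourable for a sub-solution of the second-order part. A subsidiary technical issue is securing a uniform initial gap $M(0,r)\ge\mu\bar{M}_{\lambda_0}(r)$ near $r=0$, where both sides vanish like $r^2$: this may force $\mu$ to be taken close to $M_0/(8\pi)$ and $\lambda_0$ enlarged so that the ratio of the quadratic coefficients at the origin exceeds $\mu$.
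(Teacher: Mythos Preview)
Your plan has a structural flaw, not just a technical obstacle. You take $\underline{N}=\min\bigl(1,\mu\bar M_{\lambda(t)}\bigr)$ with $\mu\cdot 8\pi/M_0>1$. Two independent problems kill this choice. First, the initial ordering $M(0,r)\ge\underline{N}(0,r)$ fails on the outer region: for $r\ge R(0)$ your candidate equals $1$, whereas $M(0,r)\le M(0,\infty)=1$ with strict inequality unless the initial density is compactly supported in $\{r<R(0)\}$---something not assumed. Second, at the kink $r=R(t)$ the radial derivative of $\underline{N}$ jumps \emph{downward} (from $(\mu\bar M_{\lambda})'>0$ inside to $0$ outside), so $(\underline N'/r)'$ picks up a \emph{negative} Dirac mass and $L(\underline N)$ picks up a \emph{positive} one. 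That is exactly the wrong sign for a subsolution; smoothing from below cannot remove it, and your parenthetical ``confirming that the one-sided jump \ldots\ has the sign favourable'' would in fact confirm the opposite. In short: $\min$ of a supersolution-type pair gives a supersolution (which is why the very same template works in Lemma~\ref{rhoglobal}), not a subsolution.

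The paper resolves both issues by building the subsolution as a \emph{maximum} of two smooth subsolutions rather than a minimum: it fixes $1<\mu_0<\mu_1$ with $\mu_1\cdot 8\pi/M_0<1$ and $M(0,r)>\mu_1\bar M_{\lambda_0}(r)$, and sets
\[
\underline{N}(t,r)=\max\!\left(\mu_1\,\bar M_{\lambda_0}(r),\ \mu_0\,\bar M_{\lambda(t)}(r)\right),\qquad \lambda(t)=\lambda_0 e^{At},\ A<0.
\]
The outer piece is the \emph{fixed} profile $\mu_1\bar M_{\lambda_0}$ (everywhere strictly below $1$, so the initial ordering holds for all $r$), and the inner piece is the shrinking one. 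Each piece is checked separately to satisfy $L\le 0$ using $m(t)>M_0$; at the crossing the derivative of the max jumps \emph{upward}, which contributes with the correct sign. The rate $A$ is found by evaluating the worst case at the fixed outer radius $R_0$ (defined by $\mu_1\bar M_{\lambda_0}(R_0)=\mu_0\cdot 8\pi/M_0$), yielding an exponential $\lambda(t)\to 0$ and hence the claimed concentration $M(t,r(t))>8\pi/M_0$ along a shrinking radius. Your overall strategy (barrier plus comparison) is right; only the pairing $\min\!/\!\max$ and the choice of outer piece need to be swapped.
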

\begin{proof}
Since $\frac{8\pi}{M_0}<1$, We firstly choose $1<\mu_0<\mu_1$ such that
\begin{align}\label{mu0}
\mu_0 \frac{8\pi}{M_0}<\mu_1 \frac{8\pi}{M_0}<1~~~\mbox{and}~~~ M(0,r)>\mu_1 \bar{M}_{\lambda_0}(r)>\mu_0 \bar{M}_{\lambda_0}(r),~\forall r>0.
\end{align}
Combining $M_0<m_0$ with \er{mt} also leads to
\begin{align}\label{mtxiaom0}
M_0<m(t)<m_0.
\end{align}
We consider the function
\begin{align}
\underline{N}(t,r)=\max\left( \mu_1 \frac{8\pi}{M_0\left(1+\lambda_0 r^{-2}\right)},\mu_0 \frac{8\pi}{M_0\left(1+\lambda(t) r^{-2}\right)} \right),
\end{align}
where $\lambda(t)=\lambda_0 e^{A t}$ for some $A<0,$ we will prove that $\underline{N}$ is a subsolution as it is the maximum of two subsolutions.

Firstly we denote $N_1:=\mu_1 \frac{8\pi}{M_0\left(1+\lambda_0 r^{-2}\right)}$ and compute
\begin{align}
\frac{\partial}{\partial t} N_1-r \left( \frac{N_1'}{r} \right)'-\frac{m(t)}{2\pi r} N_1'N_1 &=N_1 \frac{r^{-2}}{1+\lambda_0 r^{-2}} \frac{8 \lambda_0}{\lambda_0+r^2} \left( 1-\mu_1 \frac{m(t)}{M_0} \right) \nonumber \\
& \le N_1 \frac{r^{-2}}{1+\lambda_0 r^{-2}} \frac{8 \lambda_0}{\lambda_0+r^2} \left( 1-\mu_1 \right) \nonumber \\
& <0.
\end{align}
Then it follows from \er{mu0} that $N_1$ is a subsolution to \er{rhorhoradial}. Secondly, we have to prove that $\mu_0 \frac{8\pi}{M_0\left(1+\lambda(t) r^{-2}\right)}$ is a subsolution in the interval $0<r\le R(t)$ where it achieves the maximum, i.e.
\begin{align}
\mu_1 \frac{8\pi}{M_0\left(1+\lambda_0 R(t)^{-2}\right)}=\mu_0 \frac{8\pi}{M_0\left(1+\lambda(t) R(t)^{-2}\right)}.
\end{align}
By the definition of $R(t)$, there exists a constant $R_0$ such that $r \le R(t) \le R_0$ with $\mu_1 \frac{8\pi}{M_0\left(1+\lambda_0 R_0^{-2}\right)}=\mu_0 \frac{8\pi}{M_0},$ then using \er{mtxiaom0} and a direct calculation provide
\begin{align}
\frac{\partial}{\partial t} \underline{N}-r \left( \frac{\underline{N}'}{r} \right)'-\frac{m(t)}{2\pi r} \underline{N}'\underline{N} &=
\underline{N} \frac{r^{-2}}{1+\lambda(t) r^{-2}} \left( -\lambda'(t)+\frac{8 \lambda(t)}{\lambda(t)+r^2} \left( 1-\mu_0 \frac{m(t)}{M_0}\right) \right)  \nonumber \\
& \le \underline{N} \frac{r^{-2}}{1+\lambda(t) r^{-2}} \left( -\lambda'(t)+\frac{8 \lambda(t)}{\lambda(t)+r^2} \left( 1-\mu_0 \right) \right) \nonumber \\
& \le \underline{N} \frac{r^{-2}}{1+\lambda(t) r^{-2}} \left( -\lambda'(t)+\frac{8 \lambda(t) R_0^{-2}}{\lambda_0 R_0^{-2}+1} \left( 1-\mu_0 \right) \right) \nonumber \\
& =\underline{N} \frac{r^{-2}}{1+\lambda(t) r^{-2}} \left( -\lambda'(t)+\frac{8 \mu_0 R_0^{-2}\lambda(t)}{\mu_1} \left( 1-\mu_0 \right) \right) \nonumber \\
& =0
\end{align}
by choosing
\begin{align}
\lambda(t)=\lambda_0 e^{-\frac{8 \mu_0 R_0^{-2}}{\mu_1}(\mu_0-1) t}=\lambda_0 e^{A t}.
\end{align}
We notice that $\underline{N}=\mu_1 \frac{8\pi}{M_0\left(1+\lambda_0 r^{-2}\right)}$ for $r>R(t)$. Now keeping \er{mu0} in mind, the comparison principle gives
\begin{align}
M(t,r) \ge \underline{N}(t,r) \ge \mu_0 \frac{8\pi}{M_0\left(1+\lambda(t) r^{-2}\right)}=\mu_0 \frac{8\pi}{M_0\left(1+\lambda_0 e^{A t} r^{-2}\right)}.
\end{align}
Therefore, at $r=e^{\frac{A}{3}t}$ we find
\begin{align}
M(t,r) \ge \mu_0 \frac{8\pi}{M_0\left(1+\lambda_0 e^{\frac{A}{3} t}\right)}
\end{align}
and the subsequent mass concentration at $r=0$
\begin{align}
M(t,r(t)) > \frac{8\pi}{M_0}
\end{align}
for $r(t) \to 0$ as $t \to \infty.$ This is our desired estimate. \quad $\Box$
\end{proof}

\subsection{Main results for the radially symmetric solutions of \er{fisherks}}
Now we are ready to summarize the main results of \er{rhorhoradial} as follows.

\begin{theorem}\label{rhothm}
Assume that the initial data $\rho(0,r)$ is radially symmetric, $\bar{\rho}_{\lambda_0}(r)$ is defined as \er{rhosr} for some $\lambda_0>0.$
\begin{itemize}
  \item[(1)] If $M_0<8\pi$ and
   \begin{align}
     \rho(0,r)<\bar{\rho}_{\lambda_0}(r)
   \end{align}
  for any $r>0,$ then the radially symmetric solution of \er{rhorhoradial} vanishes in $L^1(\R^2)$ locally as $t \to \infty.$
  \item[(2)] If $M_0>8\pi$ and
   \begin{align}
     \rho(0,r)>\bar{\rho}_{\lambda_0}(r)
   \end{align}
  for any $r>0,$ then there could exist one solution to \er{rhorhoradial} which blows up in finite time or has a mass concentration at the origin such that
  \begin{align}
    2\pi \int_0^{r(t)} \rho(t,\sigma) \sigma d\sigma > \frac{8\pi}{M_0}
  \end{align}
  for $r(t) \to 0$ as $t \to \infty.$
\end{itemize}
\end{theorem}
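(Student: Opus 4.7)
The plan is to reduce the pointwise hypotheses on $\rho(0,r)$ to pointwise hypotheses on the cumulative mass $M(0,r)=2\pi\int_0^r\sigma\rho(0,\sigma)\,d\sigma$ and then quote the comparison-based Lemmas \ref{rhoglobal} and \ref{rhoblowup}, which are already phrased in terms of $M$. All of the real analytic work was absorbed into those lemmas; what is left is essentially a bookkeeping step and a small case analysis to match parameter ranges.

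First I would carry out the explicit integration
\[
2\pi \int_0^r \sigma\, \bar{\rho}_{\lambda_0}(\sigma)\, d\sigma
= \frac{16\pi\lambda_0}{M_0}\int_0^r \frac{\sigma\,d\sigma}{(\sigma^2+\lambda_0)^2}
= \frac{8\pi r^2}{M_0(r^2+\lambda_0)}
= \bar{M}_{\lambda_0}(r),
\]
via the substitution $u=\sigma^2+\lambda_0$. Since $\rho(0,\cdot)$ is nonnegative, the pointwise inequality $\rho(0,r)<\bar{\rho}_{\lambda_0}(r)$ (respectively $>$) for every $r>0$ integrates into the corresponding pointwise inequality $M(0,r)<\bar{M}_{\lambda_0}(r)$ (respectively $>$) for every $r>0$.

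For part (1), I would then invoke Lemma \ref{rhoglobal} with $M_0<8\pi$ to obtain $M(t,r)\to 0$ uniformly on every compact interval $[0,R]$ as $t\to\infty$; since $M(t,R)=\int_{|x|\le R}\rho(t,x)\,dx$, this is exactly the claimed local $L^1$ vanishing, and it transfers to $u=m(t)\rho$ via \er{tritri}. For part (2), with $M_0>8\pi$ and $M(0,r)>\bar{M}_{\lambda_0}(r)$, there is a dichotomy: either the solution develops a singularity in finite time (which is consistent with Theorem \ref{blowup}, in force whenever $M_0>8\pi$ with finite initial second moment), or it persists globally, in which case Lemma \ref{rhoblowup} produces the mass concentration $M(t,r(t))>8\pi/M_0$ with $r(t)\to 0$ as $t\to\infty$.

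The main obstacle is not the conversion, which is a direct computation, but rather matching the parameter ranges of the two lemmas with the hypotheses of the theorem. Lemma \ref{rhoglobal} required $m_0<M_0$, so the residual range $M_0\le m_0<8\pi$ in part (1) must be handled through the sub-solution comparison with the classical Keller-Segel equation from the Remark after Theorem \ref{globalexist}; symmetrically, Lemma \ref{rhoblowup} required $M_0<m_0$, so the complementary range $m_0\le M_0$ with $M_0>8\pi$ in part (2) is covered by the finite-time blow-up branch of the dichotomy via Theorem \ref{blowup}. Once this case-splitting is organized and the stationary profile $\bar{\rho}_{\lambda_0}$ is identified with $\bar{M}_{\lambda_0}$ through the integration above, the theorem follows from the previously established results without any further analytic estimate.
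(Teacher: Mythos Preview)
Your core reduction is exactly what the paper does (implicitly): the theorem is presented there simply as a summary of Lemmas \ref{rhoglobal} and \ref{rhoblowup} with no separate proof, and the bridge from the density hypotheses $\rho(0,r)\lessgtr\bar\rho_{\lambda_0}(r)$ to the cumulative-mass hypotheses $M(0,r)\lessgtr\bar M_{\lambda_0}(r)$ is precisely the integration identity you wrote down.

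Where you go beyond the paper is in flagging the parameter mismatch (the lemmas carry the extra assumptions $m_0<M_0$ and $M_0<m_0$ respectively, which the theorem statement omits). Your proposed repairs for the residual ranges, however, do not work. For part (1) with $M_0\le m_0<8\pi$, the Remark after Theorem \ref{globalexist} yields only global existence via sub-solution comparison; it says nothing about local $L^1$ vanishing, which is the actual conclusion here. For part (2) with $m_0\le M_0$, invoking Theorem \ref{blowup} to force the finite-time blow-up branch requires finite initial second moment, but the hypothesis $\rho(0,r)>\bar\rho_{\lambda_0}(r)$ for all $r>0$ forces $\int_{\R^2}|x|^2\rho_0\,dx\ge\int_{\R^2}|x|^2\bar\rho_{\lambda_0}\,dx=\infty$, so that theorem is inapplicable. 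The paper in fact sidesteps this issue rather than resolving it: the companion Theorem \ref{uthm}, which translates the result back to $u$, explicitly reinstates the constraints $m_0<M_0<8\pi$ and $m_0>M_0>8\pi$. So Theorem \ref{rhothm} is evidently meant to be read with those same constraints (an omission in its statement), and your additional case analysis should simply be dropped.
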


By using \er{tritri}, we transfer Theorem \ref{rhothm} to the original system \er{fisherks}.

\begin{theorem}\label{uthm}
Assume that $u_0(r)$ is the radially symmetric initial data to \er{fisherks}.
\begin{itemize}
  \item[(1)] If $m_0<M_0<8\pi$ and
   \begin{align}
     u_0(r)<\frac{8 m_0}{M_0}\frac{\lambda_0}{\left(\lambda_0+r^2\right)^2},~~\forall r>0
   \end{align}
  for some $\lambda_0>0,$ then any radially symmetric solution of \er{fisherks} vanishes in $L^1(\R^2)$ locally as $t \to \infty.$
  \item[(2)] If $m_0>M_0>8\pi$ and
   \begin{align}
     u_0(r)>\frac{8 m_0}{M_0}\frac{\lambda_0}{\left(\lambda_0+r^2\right)^2},~~\forall r>0
   \end{align}
  for some $\lambda_0>0,$ then any radially symmetric solution of \er{fisherks} has finite time blow-up or has a mass concentration at the origin such that
  \begin{align}
    2\pi \int_0^{r(t)} u(t,\sigma) \sigma d\sigma > 8\pi
  \end{align}
  for $r(t) \to 0$ as $t \to \infty.$
\end{itemize}
\end{theorem}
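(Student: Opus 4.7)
The plan is to translate Theorem \ref{rhothm} verbatim from $\rho$ back to $u$ through the rescaling \er{tritri}, i.e.\ $\rho(x,t)=u(x,t)/m(t)$, using Theorem \ref{rhothm} as a black box and estimating $m(t)$ via the explicit formula \er{mt}. The initial data translate cleanly: $\rho(0,r)=u_0(r)/m_0$, so the hypothesis $u_0(r)<\tfrac{8m_0}{M_0}\tfrac{\lambda_0}{(\lambda_0+r^2)^2}$ is exactly $\rho(0,r)<\bar{\rho}_{\lambda_0}(r)$ with $\bar{\rho}_{\lambda_0}$ given by \er{rhosr}, and similarly for the reverse inequality. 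Since \er{tritri} transforms \er{fisherks} into \er{rhorho}, and radial symmetry is preserved, the corresponding $\rho$ solves \er{rhorhoradial} with initial density $\rho_0=u_0/m_0$.

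For part (1), the assumption $m_0<M_0<8\pi$ and Theorem \ref{rhothm}(1) yield $\rho(t,\cdot)\to 0$ in $L^1_{\mathrm{loc}}(\R^2)$ as $t\to\infty$. From \er{mt}, $m(t)$ is monotone increasing in $t$ and uniformly bounded above by $M_0<\infty$, so for any $R>0$
\begin{align*}
\int_{B_R} u(t,x)\,dx = m(t)\int_{B_R} \rho(t,x)\,dx \le M_0\int_{B_R}\rho(t,x)\,dx \longrightarrow 0,
\end{align*}
which gives the local $L^1$-vanishing of $u$.

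For part (2), the assumption $m_0>M_0>8\pi$ and Theorem \ref{rhothm}(2) give one of two alternatives. Either $\rho$ blows up in finite time, in which case $u=m(t)\rho$ also does, since \er{mt} shows $m_0>M_0$ forces $M_0<m(t)\le m_0$ for all $t\ge 0$, so $m(t)$ stays bounded away from both $0$ and $\infty$. Or there is a radius $r(t)\to 0$ with $2\pi\int_0^{r(t)}\sigma\rho(t,\sigma)\,d\sigma>8\pi/M_0$. Multiplying by $m(t)>M_0$ gives
\begin{align*}
2\pi\int_0^{r(t)}\sigma u(t,\sigma)\,d\sigma = m(t)\cdot 2\pi\int_0^{r(t)}\sigma\rho(t,\sigma)\,d\sigma > M_0\cdot\frac{8\pi}{M_0}=8\pi,
\end{align*}
which is the claimed mass concentration at the origin.

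There is no substantive obstacle here: Theorem \ref{rhothm} does all the analytical work, and the estimates on $m(t)$ (uniform upper bound by $\max(m_0,M_0)$ in case (1); strict lower bound by $M_0$ and upper bound by $m_0$ in case (2)) are immediate from the explicit solution \er{mt} of the logistic ODE for the total mass. The only point worth flagging is the sign of $m(t)-M_0$: it is positive precisely when $m_0>M_0$, which is exactly what is needed to upgrade the $\rho$-level threshold $8\pi/M_0$ to the sharp $u$-level threshold $8\pi$ in part (2).
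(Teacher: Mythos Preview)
Your proposal is correct and follows exactly the approach the paper takes: the paper's entire proof of Theorem \ref{uthm} is the one-line remark ``By using \er{tritri}, we transfer Theorem \ref{rhothm} to the original system \er{fisherks},'' and you have simply written out that transfer in full detail. Your observations that $m(t)\le M_0$ in case (1) and $m(t)>M_0$ in case (2), read off from \er{mt}, are precisely what is needed to convert the $\rho$-level conclusions into the stated $u$-level ones.
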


we comment that, in the blow-up result, the limitation on the second moment are in fact useless in the radially symmetric case and they can be relaxed by the comparison condition with the steady state \er{Msr} which corresponds to an infinite second moment. In contrast to Section \ref{sec2}, infinite initial second moment can produce infinite time blow-up.

\begin{appendix}
\section*{Appendix}
\begin{lemma}\label{ulogusemi}
Let $u_\varepsilon,u \in L_+^1(\R^2)$ satisfy
\begin{align}
&\int_{\R^2} |x|^2 u_\varepsilon dx<\infty,\quad \int_{\R^2} |x|^2 u dx<\infty, \\
&\int_{\R^2} u_\varepsilon |\log u_\varepsilon| dx <\infty, \\
&u_\varepsilon \rightharpoonup u ~~(\varepsilon \to 0)~~ \mbox{weakly~~in~~} L^1(\R^2).
\end{align}
Then
\begin{align}
\int_{\R^2} u(x) |\log u(x)| dx<\infty,
\end{align}
and
\begin{align}\label{star11}
\int_{\R^2} u(x) \log u(x) dx \le \displaystyle \liminf_{\varepsilon \to 0}\int_{\R^2} u_\varepsilon(x) \log u_\varepsilon(x) dx.
\end{align}
\end{lemma}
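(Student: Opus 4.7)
The plan is to establish the integrability $\int u|\log u|\,dx<\infty$ first, and then prove the lower semicontinuity by combining Mazur's lemma with Fatou's lemma, using the pointwise bound $\varphi|\log\varphi|\mathbf{1}_{\varphi\le 1}\le e^{-|x|^2}+2\varphi|x|^2$ from \eqref{xiaoyu1} to control the negative part of the entropy in terms of the second moment.

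First I would verify $\int|x|^2 u\,dx<\infty$. Since $|x|^2\mathbf{1}_{|x|\le R}\in L^\infty(\R^2)$, weak $L^1$ convergence gives $\int_{|x|\le R}|x|^2 u\,dx=\lim_{\varepsilon\to 0}\int_{|x|\le R}|x|^2 u_\varepsilon\,dx\le\sup_\varepsilon\int|x|^2 u_\varepsilon\,dx$, and letting $R\to\infty$ by monotone convergence bounds $\int|x|^2 u\,dx$; in particular $\int u|x|^2\,dx\le\liminf_\varepsilon\int u_\varepsilon|x|^2\,dx$. Applying \eqref{xiaoyu1} to $u$ then yields $\int(u\log u)_-\,dx=\int u|\log u|\mathbf{1}_{u\le 1}\,dx\le\int e^{-|x|^2}\,dx+2\int u|x|^2\,dx<\infty$. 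For $\int(u\log u)_+\,dx$, I would invoke the auxiliary convex function $\Psi(s)=s\log(1+s)$, which is non-negative, convex on $[0,\infty)$, and satisfies $\Psi(s)\le C(s+s|\log s|)$, so $\sup_\varepsilon\int\Psi(u_\varepsilon)\,dx<\infty$. Weak $L^1$ lower semicontinuity of $v\mapsto\int_{B_R}\Psi(v)\,dx$ on each bounded ball (a consequence of Mazur plus Fatou since $\Psi\ge 0$ and $|B_R|<\infty$), followed by monotone convergence as $R\to\infty$, gives $\int\Psi(u)\,dx<\infty$; since $u\log u\le\Psi(u)$ on $\{u\ge 1\}$, this yields $\int(u\log u)_+\,dx<\infty$ and hence $\int u|\log u|\,dx<\infty$.

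For the lsc inequality \eqref{star11}, I would first extract a subsequence $\{\varepsilon_k\}$ with $\int u_{\varepsilon_k}\log u_{\varepsilon_k}\,dx\to L:=\liminf_\varepsilon\int u_\varepsilon\log u_\varepsilon\,dx$ (assumed finite, else the claim is trivial), then invoke Mazur's lemma to obtain convex combinations $v_n=\sum_{k\ge n}\alpha_k^n u_{\varepsilon_k}$, $\alpha_k^n\ge 0$, $\sum_k\alpha_k^n=1$, with $v_n\to u$ strongly in $L^1(\R^2)$, whence $v_n\to u$ a.e.\ along a further subsequence. Convexity of $s\mapsto s\log s$ on $[0,\infty)$ and linearity of the second-moment term together give the pointwise inequality
\begin{align*}
v_n\log v_n+e^{-|x|^2}+2v_n|x|^2\;\le\;\sum_k\alpha_k^n\bigl(u_{\varepsilon_k}\log u_{\varepsilon_k}+e^{-|x|^2}+2u_{\varepsilon_k}|x|^2\bigr),
\end{align*}
whose left-hand side is non-negative by \eqref{xiaoyu1}. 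Applying Fatou to the left-hand side (which converges a.e.\ to $u\log u+e^{-|x|^2}+2u|x|^2$) and integrating, while observing that convex combinations of $\{\int u_{\varepsilon_k}\log u_{\varepsilon_k}\}$ converge to $L$ and pairing the moment terms via a further subsequence extraction, the resulting inequality reduces, after cancelling $\int e^{-|x|^2}\,dx$ and absorbing the second-moment contributions via the weak lsc bound $\int u|x|^2\,dx\le\liminf_\varepsilon\int u_\varepsilon|x|^2\,dx$ established above, to $\int u\log u\,dx\le L$.

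The main obstacle is the coupling between the entropy and the second moment forced by \eqref{xiaoyu1}: the $2u_\varepsilon|x|^2$ contribution sneaks into the Fatou step, yet the second moment under weak $L^1$ convergence is only lower semicontinuous, not continuous. The trick is to arrange the Mazur--Fatou argument so that the second-moment term appears symmetrically on both sides of the pointwise convex-combination inequality (using its linearity), allowing it to be absorbed by its own weak lsc bound rather than requiring strong moment convergence.
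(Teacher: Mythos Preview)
Your argument for the integrability $\int u|\log u|\,dx<\infty$ via the nonnegative convex $\Psi(s)=s\log(1+s)$ is fine and is a clean alternative to the paper's truncation $\mathbf{1}_{\{1\le u\le m\}}$. The genuine gap is in the lsc step, precisely at the point you flag as ``the main obstacle'' and claim to resolve by symmetry.

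After Mazur and Fatou you obtain
\[
\int_{\R^2}\!\!\bigl(u\log u+e^{-|x|^2}+2u|x|^2\bigr)\,dx
\;\le\;\liminf_{n}\Bigl(\textstyle\sum_k\alpha_k^n\!\int u_{\varepsilon_k}\log u_{\varepsilon_k}\,dx+\int e^{-|x|^2}\,dx+2\int v_n|x|^2\,dx\Bigr),
\]
and since the first sum tends to $L$ this reduces to
\[
\int u\log u\,dx+2\int u|x|^2\,dx\;\le\;L+2\liminf_n\int v_n|x|^2\,dx.
\]
To extract $\int u\log u\,dx\le L$ you would need $\int u|x|^2\,dx\;\ge\;\liminf_n\int v_n|x|^2\,dx$. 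But $v_n\to u$ only in $L^1$ (and a.e.), and against the unbounded weight $|x|^2$ this gives, by Fatou, exactly the \emph{opposite} inequality $\int u|x|^2\,dx\le\liminf_n\int v_n|x|^2\,dx$. The ``weak lsc bound'' you invoke points the wrong way, and the symmetry of the moment term in your pointwise convexity inequality does not help: that symmetry only says $\int v_n|x|^2\,dx=\sum_k\alpha_k^n\int u_{\varepsilon_k}|x|^2\,dx$, which is already how $B_n$ entered. No subsequence extraction repairs this, since along any subsequence the moment inequality still goes the wrong way.

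The paper avoids this by never introducing the full second moment into the Fatou step. It first proves (step~(i)) that for $0<\beta<2$ one has the genuine \emph{limit} $\int(1+|x|^\beta)u_\varepsilon\,dx\to\int(1+|x|^\beta)u\,dx$, using the uniform second moment to control tails: $\int_{|x|>R}|x|^\beta u_\varepsilon\,dx\le R^{-(2-\beta)}\int|x|^2 u_\varepsilon\,dx$. It then uses the truncation $g_m=u\wedge m+\tfrac1m e^{-|x|}$, for which $|1+\log g_m(x)|\le C(1+|x|)\log(m+1)$ grows only like $|x|^1$, so the cross term $(1+\log g_m)(u_\varepsilon-u)$ in the convexity inequality $u_\varepsilon\log u_\varepsilon\ge g_m\log g_m+(1+\log g_m)(u_\varepsilon-g_m)$ vanishes in the limit by step~(i) with $\beta=1$. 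Your route can be salvaged along similar lines: replace the shift $e^{-|x|^2}+2\varphi|x|^2$ from \eqref{xiaoyu1} by $e^{-|x|^\beta}+C\varphi|x|^\beta$ for some $\beta<2$, and prove continuity (not just lsc) of $\int|x|^\beta v_n\,dx$ via the uniform second-moment tightness.
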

\begin{proof}
In the following, $C,C_1,C_2$ stand for finite positive constants that are independent of $\varepsilon$. We will give the proof step by step.
\begin{itemize}
  \item[\textbf{(i)}] We firstly show that
   \begin{align}\label{3}
    \displaystyle \lim_{\varepsilon \to 0} \int_{\R^2} (1+|x|^\beta) u_\varepsilon(x) dx=\int_{\R^2} (1+|x|^\beta) u(x) dx,\quad \mbox{for~~any}~~0<\beta<2.
   \end{align}
\end{itemize}
In fact, for any $R>1,$
\begin{align*}
 & \left| \int_{\R^2}(1+|x|^\beta)(u_\varepsilon(x)-u(x))dx   \right| \\
\le & \left| \int_{\R^2} (1+|x|^\beta) 1_{\{|x|\le R \}} (u_\varepsilon(x)-u(x))dx+\int_{|x|>R}(1+|x|^\beta) u_\varepsilon(x)dx+ \int_{|x|>R}(1+|x|^\beta) u(x)dx \right|.
\end{align*}
We deal with the last two terms as follows.
\begin{align*}
&\int_{|x|>R}(1+|x|^\beta) u_\varepsilon(x)dx+ \int_{|x|>R}(1+|x|^\beta) u(x)dx \\
\le & 2 \left( \int_{|x|>R} |x|^\beta u_\varepsilon(x) dx+ \int_{|x|>R} |x|^\beta u(x) dx   \right) \\
\le & \frac{2}{R^{2-\beta}} \left( \int_{\R^2} |x|^2 u_\varepsilon(x) dx+ \int_{\R^2} |x|^2 u(x) dx   \right) \le \frac{C}{R^{2-\beta}}.
\end{align*}
Since, by weak convergence,
\begin{align*}
\left| \int_{\R^2} (1+|x|^\beta) 1_{\{|x|\le R \}} (u_\varepsilon(x)-u(x))dx \right| \to 0,\quad \varepsilon \to 0.
\end{align*}
It follows that
\begin{align*}
\displaystyle \limsup_{\varepsilon \to 0} \left| (1+|x|^\beta)(u_\varepsilon(x)-u(x))\right| \le \frac{C}{R^{2-\beta}},\quad \forall R>1.
\end{align*}
Letting $R \to \infty$ leads to \er{3}.
\begin{itemize}
  \item[\textbf{(ii)}] Let's prove
   \begin{align}\label{1}
     \int_{\R^2} u(x) |\log u(x)| dx<\infty.
   \end{align}
\end{itemize}
Actually, by convexity of $y \mapsto y \log y, y \ge 0,$ one has
\begin{align*}
u_\varepsilon \log u_\varepsilon  \ge u \log u+(1+\log u) (u_\varepsilon-u).
\end{align*}
From this we have, for any $m \in \mathbb{N}$,
\begin{align*}
& 1_{\{ 1\le u(x)\le m \}}u_\varepsilon \log u_\varepsilon  \\
\ge & 1_{\{ 1\le u(x)\le m \}}u \log u+1_{\{ 1\le u(x)\le m \}}(1+\log u) (u_\varepsilon-u).
\end{align*}
Hence
\begin{align*}
&\int_{\R^2} 1_{\{ 1\le u(x)\le m \}}u \log u dx+\int_{\R^2} 1_{\{ 1\le u(x)\le m \}}(1+\log u) (u_\varepsilon-u) dx \\
&\le \int_{\R^2} 1_{\{ 1\le u(x)\le m \}}u_\varepsilon \log u_\varepsilon dx \\
&\le \displaystyle \sup_{\varepsilon >0} \int_{\R^2} u_\varepsilon |\log u_\varepsilon| dx=:C_1 (<\infty).
\end{align*}
Since, by weak convergence,
\begin{align*}
\displaystyle \lim_{\varepsilon \to 0} \int_{\R^2} 1_{\{ 1 \le u(x) \le m \}} (1+\log u) (u_\varepsilon-u) dx=0,
\end{align*}
it follows by passing to the limit $\varepsilon \to 0$ that
\begin{align*}
\int_{\R^2} 1_{\{ 1\le u(x)\le m \}}u \log u dx \le C_1.
\end{align*}
Then letting $m \to \infty$ gives by Levi monotone convergence
\begin{align*}
\int_{\R^2} 1_{\{ u(x) \ge 1 \}}u \log u dx \le C_1.
\end{align*}
This together with \er{xiaoyu1} and the boundedness of $\int_{\R^2} |x|^2 u(x)dx $ yields
\begin{align*}
\int_{\R^2} u(x) |\log u(x)| dx &=\int_{\R^2} 1_{\{ u(x)<1\} } u(x) |\log u(x)| dx+ \int_{\R^2} 1_{\{ u(x) \ge 1 \}} u(x)|\log u(x)| dx \nonumber \\
& \le \int_{\R^2} e^{-|x|^2} dx+2\int_{\R^2} u(x) |x|^2 dx+C_1<\infty.
\end{align*}
Hence we close the proof of \er{1}.
\begin{itemize}
  \item[\textbf{(iii)}] Next, to prove \er{star11}, we consider the truncation
      \begin{align*}
         g_m(x)=u(x) \wedge m+\frac{1}{m} e^{-|x|}.
      \end{align*}
By convexity we have
\begin{align*}
u_\varepsilon \log u_\varepsilon  \ge g_m(x)\log g_m(x)+(1+\log g_m(x))(u_\varepsilon-g_m),
\end{align*}
i.e.
\begin{align}\label{star22}
g_m(x)\log g_m(x) \le u_\varepsilon \log u_\varepsilon +(1+\log g_m(x))(g_m -u_\varepsilon).
\end{align}
\end{itemize}

We firstly use \er{1} to check the condition of $g_m \log g_m$ for using Lebesgue dominated convergence, i.e.
\begin{align}\label{2}
\int_{\R^2} g_m |\log g_m|dx \le C_2.
\end{align}
By convexity of $y \mapsto y \log y$ on $y \ge 0$ one has
\begin{align}\label{6}
(a+b)\log (a+b) &=\left( \frac{1}{2} 2a+\frac{1}{2} 2b  \right) \log \left( \frac{1}{2} 2a+\frac{1}{2} 2b  \right)  \nonumber \\
&\le \frac{1}{2} 2a \log 2a+\frac{1}{2} 2b \log 2b,~~a,b \ge 0.
\end{align}
We also have
\begin{align}\label{7}
(a+b) \log \frac{1}{a+b} \le a \log \frac{1}{a}+b \log \frac{1}{b},~~a,b \ge 0.
\end{align}
Now we use \er{6} to see that if $g_m \ge 1,$ then, because $m \ge 4,$ we must have $u(x)>1/2,$ so that
\begin{align*}
&g_m(x) |\log g_m(x)|=g_m(x) \log g_m(x) \\
\le & (u(x)\wedge m) \log (2 u(x)\wedge m)+\frac{1}{m}e^{-|x|}\log \left( \frac{2}{m} e^{-|x|} \right) \\
\le & u(x) \log (2 u(x))+\frac{1}{m}e^{-|x|} \frac{2}{m}  e^{-|x|} \\
\le & u(x)+u(x) |\log u(x)|+e^{-2|x|}.
\end{align*}
While if $g_m(x)<1,$ then $u(x)<1,$ then by \er{7} we have
\begin{align*}
& g_m(x) |\log g_m(x)|=\left(  u(x)+\frac{1}{m}e^{-|x|} \right) \log \left( \frac{1}{u(x)+\frac{1}{m}e^{-|x|}}   \right) \\
\le & u(x)|\log u(x)|+\frac{1}{m} e^{-|x|} \log (m e^{|x|}) \\
\le & u(x) |\log u(x)|+\frac{2}{\sqrt{m}} e^{-|x|/2},
\end{align*}
where we used $\log y=2\log \sqrt{y} \le 2 \sqrt{y}$ for $y \ge 0.$ So we have proved an $L^1$ control
\begin{align}\label{8}
g_m(x) |\log g_m(x)| \le u(x)+u(x)|\log u(x)|+e^{-|x|/2}+e^{-2|x|}.
\end{align}

The aim next is to deal with the second term in the right hand side of \er{star22}. Using
\begin{align*}
u(x) \wedge m=u(x)-(u(x)-m)_+,
\end{align*}
we handle
\begin{align}\label{9}
&(1+\log g_m(x))(g_m(x)-u_\varepsilon(x)) \nonumber \\
=& (1+\log g_m(x))(u(x)-u_\varepsilon(x))-(1+\log g_m(x))(u(x)-m)_++(1+\log g_m(x))\frac{1}{m}e^{-|x|}.
\end{align}
By considering $g_m(x)\ge 1$ and $g_m(x)<1$ we have
\begin{align*}
|1+\log g_m(x)| \le 1+\left( \log (m+1) \vee (|x|+\log m) \right) \le 2(1+|x|)\log (m+1).
\end{align*}
Therefore we have
\begin{align}\label{10}
|1+\log g_m(x)| \frac{1}{m}e^{-|x|} \le 2 \frac{\log (m+1)}{m} (1+|x|)e^{-|x|},
\end{align}
and by \er{3},
\begin{align}\label{110}
\displaystyle \lim_{\varepsilon \to 0} \int_{\R^2} (1+\log g_m(x)) (u(x)-u_\varepsilon(x))=0.
\end{align}
Moreover we have
\begin{align}\label{12}
-(1+\log g_m(x)) (u(x)-m)_+ \le 0.
\end{align}
In fact the case $u(x) \le m$ is obvious. For the case $u(x)>m$ we have $g_m(x)>m>1,$ so that $\log g_m(x)>0.$ So \er{12} holds true.

From \er{star22}, collecting \er{9}-\er{12} we have by taking lower limit $\varepsilon \to 0$ that
\begin{align}\label{13}
\int_{\R^2} g_m(x) \log g_m(x) dx \le \displaystyle \liminf_{\varepsilon \to 0} \int_{\R^2} u_\varepsilon(x) \log u_\varepsilon(x) dx+2\frac{\log (m+1)}{m} \int_{\R^2} (1+|x|)e^{-|x|} dx,\quad \forall m \ge 4.
\end{align}
Finally by continuity one has
\begin{align}
g_m(x) \log g_m(x) \to u(x) \log u(x) \quad (m \to \infty).
\end{align}
Then it follows \er{8} and Lebesgue dominated convergence and \er{13} that
\begin{align}
\int_{\R^2} u(x) \log u(x) dx=\displaystyle \lim_{m \to \infty} \int_{\R^2} g_m(x) \log g_m(x) dx \le \displaystyle \liminf_{\varepsilon \to 0} \int_{\R^2} u_\varepsilon(x) \log u_\varepsilon(x) dx.
\end{align}
Thus we close the proof. \quad $\Box$
\end{proof}
\end{appendix}

\end{document}